\newcommand{\N}{\ensuremath{\mathbb{N}}}
\newcommand{\R}{\ensuremath{\mathbb{R}}}
\theoremstyle{definition}
	\newtheorem{defi}{Definition}
\theoremstyle{plain}
	\newtheorem{lem}[defi]{Lemma}
	\newtheorem{thm}[defi]{Theorem}
	\newtheorem{claim}[defi]{Claim}
\newtheoremstyle{named}{}{}{\itshape}{}{\bfseries}{.}{.5em}{\thmnote{#3's }#1}
\theoremstyle{named}
\newtheorem*{namedtheorem}{Theorem}
\title[The finite $\mathrm{FIN}_k$ theorem]{Finite forms of Gowers' theorem on the oscillation stability of $c_0$}
\author{Diana Ojeda-Aristizabal}
\address{Department of Mathematics, Cornell University, Ithaca,NY, 14853-4201}
\thanks{The research of the author presented in this paper was partially supported by NSF
grants DMS\textendash 0757507 and DMS\textendash 1262019. Any opinions, findings, and conclusions or recommendations
expressed in this article are those of the author and do not necessarily reflect the
views of the National Science Foundation.}
\begin{document}
\bibliographystyle{plain}

	\begin{abstract}
 	We give a constructive proof of the finite version of Gowers' $\mathrm{FIN}_k$ Theorem and analyse the corresponding upper bounds. The $\mathrm{FIN}_k$ Theorem is closely related to the oscillation stability of $c_0$. The stabilization of Lipschitz functions on arbitrary finite dimensional Banach spaces was studied well before by V. Milman (see \cite[p.6]{Mil_Schech}). We compare the finite $\mathrm{FIN}_k$ Theorem with the finite stabilization principle in the case of spaces of the form $\ell_{\infty}^n$, $n\in\N$ and establish a much slower growing upper bound for the finite stabilization principle in this particular case.
	\end{abstract}
	\maketitle

\section{Introduction}
It was observed by Milman (see \cite[p.6]{Mil_Schech}) that given a real-valued Lipschitz function defined on the unit sphere of an infinite dimensional Banach space, one can always find a finite dimensional subspace of any given dimension on the unit sphere of which the function is almost constant. This motivated the question of whether in this setting one could also pass to an infinite dimensional subspace with the same property.\\

It was only in 1992 when W.T. Gowers proved in  \cite{FINk} that $c_0$, the classical Banach space of real sequences converging to $0$ endowed with the supremum norm, has this property. For the special case of Lipschitz functions defined on the unit sphere of $c_0$ not depending on the sign of the canonical coordinates, that is such that $f(\sum a_ie_i)=f(\sum|a_i|e_i)$ for every $(a_i)$ in the sphere of $c_0$, we can restrict our attention to the positive sphere of $c_0$, the set of elements of the sphere with non-negative canonical coordinates. Gowers associated a discrete structure to a net for the positive sphere of $c_0$,  and proved a partition theorem for the structure that we shall refer to as the $\mathrm{FIN}_k$ Theorem.\\

The $\mathrm{FIN}_k$ Theorem is a generalization of Hindman's Theorem. For a fixed $k\in\N$, $\mathrm{FIN}_k$ is the set of all functions $f:\N\to\{0,1,\ldots,k\}$ that attain the maximum value $k$ and whose support $supp(f)=\{n\in\N: f(n)\neq 0\}$ is finite. Given $f,g\in \mathrm{FIN}_k$ we say that $f<g$ if the support of $f$ occurs before the support of $g$. We consider two operations in $\mathrm{FIN}_k$ defined pointwise as follows:
\begin{itemize}
	\item[(i)] \emph{Sum}: $(f+g)(n)=f(n)+g(n)$ for $f<g$,
	\item[(ii)] \emph{Tetris}: $T:\mathrm{FIN}_k\rightarrow\mathrm{FIN}_{k-1}$. For $f\in\mathrm{FIN}_k$, $(Tf)(x)=\max\{0,f(x)-1\}$.
\end{itemize}
Note that if $f_0<\ldots<f_{n-1}\in \mathrm{FIN}_k$ then $T^{l_0}(f_0)+\ldots+T^{l_{n-1}}(f_{n-1})\in \mathrm{FIN}_k$ as long as one of $l_0,\ldots,l_{n-1}$ is zero. A sequence $(f_i)_{i\in I}$ of elements of $\mathrm{FIN}_k$ with $I=\N$ or $I=n$ for some $n\in\N$ such that $f_i<f_{j}$ for all $i<j\in I$ is called a block sequence. The $\mathrm{FIN}_k$ Theorem states that for any finite coloring $c:\mathrm{FIN}_k\to \{0,1,\ldots,r-1\}$ there exists an infinite block sequence $(f_i)_{i\in\N}$ such that the combinatorial space $\langle f_i\rangle_{i\in\N}$ generated by the sequence $(f_i)_{i\in\N}$,
\[
	\langle f_i\rangle_{i\in\N}=\{T^{l_1}(f_{i_1})+\ldots+T^{l_n}(f_{i_n}): i_1<\ldots<i_n, min\{l_1,\ldots,l_n\}=0\},
\]
is monochromatic.\\
The proof uses Galvin-Glazer methods of ultrafilter dynamics. While the standard modern proof of Hindman's Theorem uses ultrafilter dynamics (see \cite[Ch. 2]{Ramsey_Spaces}), Hindman's theorem was originally proved by constructive methods \cite{Hindman} (see also \cite{Baum}). Until now there is no constructive proof of the $\mathrm{FIN}_k$ Theorem for $k>1$. In this paper we provide a constructive proof of the finite version of the $\mathrm{FIN}_k$ Theorem, namely we prove the following:

 \begin{thm}\label{main1}
 	For all natural numbers $m,k,r$ there exists a natural number $n$ such that for every $r$-coloring of $\mathrm{FIN}_k(n)$, the functions in $\mathrm{FIN}_k$ supported below $n$, there exists a block sequence in $\mathrm{FIN}_k(n)$ of length $m$ that generates a monochromatic combinatorial subspace. 
 \end{thm}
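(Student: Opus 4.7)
My plan is to deduce Theorem \ref{main1} from a finite Hales--Jewett / Graham--Rothschild-type theorem by identifying special block sequences in $\mathrm{FIN}_k$ with parameter words over the alphabet $\{0, 1, \ldots, k\}$.

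The correspondence is as follows. A block sequence $(f_1, \ldots, f_m)$ in $\mathrm{FIN}_k(n)$ in which each $f_i$ equals $k$ on a support $S_i$ and $0$ elsewhere, with $S_1 < \cdots < S_m$, matches a parameter word of dimension $m$ and length $n$ in which positions in $S_i$ carry the variable $x_i$ and all remaining positions are fixed at $0$. Substituting $x_j \mapsto k - l_j$, with the convention that $l_j = k$ corresponds to omitting $f_j$ from the sum, produces exactly the elements of the combinatorial space $\langle f_i\rangle_{i=1}^m$; the condition $\min l_j = 0$ translates to ``at least one variable is set to $k$.''

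Given this correspondence, I would apply a finite Graham--Rothschild / pointed Hales--Jewett theorem for alphabet $\{0, 1, \ldots, k\}$ and dimension $m$, provable constructively by iterating finite Hales--Jewett one variable at a time. The given $r$-coloring of $\mathrm{FIN}_k(n)$ is first extended to all of $\{0, 1, \ldots, k\}^n$ by assigning a new default color to words not attaining $k$, then the theorem is applied with $r+1$ colors. The resulting monochromatic $m$-dimensional subspace cannot be default-colored, since substituting any single variable to $k$ yields a word attaining $k$; hence its color is one of the original $r$, yielding a monochromatic combinatorial space of the required form after restricting to substitutions that attain $k$.

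The main obstacle is ensuring that the parameter word theorem produces parameter words in the needed ``pointed block'' form (variables appearing in order, fixed positions only at $0$), and then tracking the resulting bounds explicitly. The iteration on dimension uses finite Hales--Jewett with a large auxiliary alphabet at each step, and each application multiplies the length $n$ by a rapidly growing factor. Careful bookkeeping of this nested iteration is what yields the explicit (tower-type) upper bounds on $n(m, k, r)$ analyzed in the paper, and is also the step where the construction must be made fully constructive rather than relying on a compactness argument.
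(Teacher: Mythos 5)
Your dictionary between ``flat'' block sequences (each $f_i$ equal to $k$ on its support and $0$ elsewhere) and zero-background, ordered parameter words is accurate, and it is also fine that the theorem only asks for \emph{some} block sequence, so producing a flat one would suffice. The gap is exactly at the point you flag as ``the main obstacle,'' and it is not a bookkeeping issue: the reduction fails there. First, the extension-by-default-color trick is internally inconsistent with the zero-background requirement. A zero-background $m$-parameter subspace always contains the substitution instance sending every variable to $0$, namely the zero word, which does not attain $k$ and so carries the default color, while the instance sending $x_1\mapsto k$ and the rest to $0$ does attain $k$ and carries one of the original $r$ colors. Hence no zero-background subspace can be monochromatic for the extended coloring; what Hales--Jewett actually returns for that coloring is a subspace whose \emph{fixed part} attains $k$, and the set of its substitution instances is then a translate of a combinatorial space by a nonzero constant word. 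Such a translate is not of the form $\langle f_i\rangle_{i<m}$: omitting an index from a Gowers sum zeroes out that block, whereas a fixed position keeps its letter in every instance.

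Second, and more fundamentally, the strengthened parameter-word theorem you would need---fixed positions forced to equal $0$, variable sets forced to be consecutive blocks, and monochromaticity required only over the instances attaining $k$---is not a consequence of Hales--Jewett or Graham--Rothschild for $k\geq 2$; it is essentially a restatement of the finite $\mathrm{FIN}_k$ Theorem itself, since the letters $1,\dots,k-1$ play the role of the Tetris images $T^{l}f_i$, and it is precisely the interaction of these intermediate levels with the zero background that the parameter-set machinery does not control. (The case $k=1$ does reduce to Graham--Rothschild over a one-letter alphabet, but only because there is then no letter other than $0$ available for the fixed positions; even there, securing \emph{blocks} rather than merely disjoint variable sets takes extra work.) The paper instead argues by induction on $k$: the base case is Folkman's Theorem, proved from Ramsey's Theorem plus van der Waerden's Theorem---which is where the block structure is obtained---and the step from $k$ to $k+1$ codes an element of $\mathrm{FIN}_{k+1}$ by a finite sequence of elements of $\mathrm{FIN}_k$ via the operation $U$ and invokes the multidimensional version of the theorem for $k$ in dimensions $3,5,\dots,2N+1$. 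To salvage your route you would have to prove the zero-background pointed parameter-word theorem directly, and that is where all the work of the original problem reappears.
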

 
Let $g_{k}(m,r)$ be the minimal $n$ given by the theorem. This result follows easily from the infinite version by a compactness argument. However such an argument could not be written in Peano Arithmetic (PA) and gives no information about the bounds of the function $g_k(n,r)$. Sometimes compactness can be used even when there is no inductive proof that can be written in PA. This is the case for the following example of a theorem unprovable in PA found by J. Paris and L. Harrington (see \cite{Paris_Harrington}):\\

	\emph{(PH) For all natural numbers $n,k,r$ there exists a natural number $m$ such that for any $r$-coloring of $[n,m]^k$, the $k$-element subsets of the interval of natural numbers $[n,m]$, there exists a monochromatic set $S$ such that $\#S>\min S$.}\\

The situation is different for the finite $\mathrm{FIN}_k$ Theorem since our proof uses only induction and can be written in PA. In the notation of Theorem \ref{main1}, the bounds we find for $k>1$ are 
\[
	g_k(n,2)\leq f_{4+2(k-1)}\circ f_4(6m-2),
\]
where for $i\in\N$, $f_i$ denotes the $i$-th function in the Ackermann Hierarchy.\\

One could expect that the bounds for the quantitative version of Milman's result about the stabilization of Lipschitz functions on finite dimensional Banach spaces, for the special case of the spaces $\ell_{\infty}^n$, $n\in\N$, (the original statement of the Finite Stabilization Principle can be found in \cite[p.6]{Mil_Schech}), would be comparable to those we find for the finite version of the $\mathrm{FIN}_k$ Theorem. However this is not the case since we find much smaller bounds for the Finite Stabilization Principle in the special case of $\ell_{\infty}^n$- spaces and functions defined on their positive spheres. More precisely, we study in this particular case the following quantitative version of the Finite Stabilization Principle presented in \cite{FDDs}:
 
\begin{thm}\label{finite_stab1}
	For all real numbers $C,\epsilon>0$ and every natural number $m$ there is a natural number $n$ such that for every $n$-dimensional normed space $F$  with a Schauder basis $(\mathbf{x}_i)_{i=0}^{n-1}$, whose basis constant does not exceed $C$, and for every $C$-Lipschitz $f:F\to\R$, there is a block subsequence $(\mathbf{y}_i)_{i=0}^{m-1}$ of $(\mathbf{x}_i)_{i=0}^{n-1}$ so that
	\[
		\mbox{osc}(f\upharpoonright S_{[\mathbf{y}_i]_{i=0}^{m-1}})<\epsilon,
	\]
	where $S_Y$, for $Y$ any Banach space, is the unit sphere of $Y$ $S_Y=S_{[\mathbf{y}_i]_{i=0}^{m-1}}$. And $[\mathbf{y}_i]_{i=0}^{m-1}$ is the vector space generated by the sequence $(\mathbf{y}_i)_{i=0}^{m-1}$.
\end{thm}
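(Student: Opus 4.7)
The plan is to reduce Theorem \ref{finite_stab1} to the finite $\mathrm{FIN}_k$ theorem (Theorem \ref{main1}) by discretizing the unit sphere. Given $C,\epsilon>0$ and $m$, the first step is to choose a discretization parameter $k$ large enough that $2C/k<\epsilon/3$. Since the basis constant is at most $C$, every vector $y=\sum_i b_i\mathbf{y}_i$ on the unit sphere of a block subspace $[\mathbf{y}_i]_{i=0}^{m-1}$ with normalized block basis satisfies $|b_i|\leq 2C$, so the rational approximation $\tilde y=k^{-1}\sum_i\lfloor kb_i\rfloor\mathbf{y}_i$ lies at distance at most $\epsilon/(3C)$ from $y$. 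The $C$-Lipschitz property then yields $|f(y)-f(\tilde y)|<\epsilon/3$, so it suffices to control $f$ on the finite grid of such rational block vectors.

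The second step is to translate the problem into a finite coloring of a $\mathrm{FIN}_k$-structure. The range of $f$ on the unit ball has bounded diameter, so it partitions into $r=O(C/\epsilon)$ intervals of length $<\epsilon/3$, inducing a finite coloring of rational block vectors by the interval into which $f$ falls. Each such rational vector, after splitting its coefficients into positive and negative parts and rescaling by $k$, corresponds to a pair of elements of $\mathrm{FIN}_k$. I would then apply Theorem \ref{main1} iteratively --- once to monochromatize the positive-part coloring on a long enough initial block sequence, then again on the resulting subsequence to monochromatize the negative part --- with starting length $n$ provided by $g_k$, obtaining a block subsequence of length $m$ whose full combinatorial $\mathrm{FIN}_k$-span is monochromatic in both colorings.

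To conclude, given any two unit vectors $u,v$ in the resulting block subspace, I would approximate each by a vector $\tilde u,\tilde v$ in the monochromatic combinatorial span within $\epsilon/(3C)$. Monochromaticity gives $|f(\tilde u)-f(\tilde v)|<\epsilon/3$, and the Lipschitz estimate gives $|f(u)-f(\tilde u)|,|f(v)-f(\tilde v)|<\epsilon/3$, so by the triangle inequality $\mathrm{osc}(f\upharpoonright S_{[\mathbf{y}_i]_{i=0}^{m-1}})<\epsilon$, as required.

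The main obstacle is the sign issue: $\mathrm{FIN}_k$ is a combinatorial structure of non-negative functions attaining the maximum $k$, while block vectors in a general Schauder basis have signed coefficients. The sign decomposition inflates the number of colors and forces an iterated application of Theorem \ref{main1}, each iteration raising the Ackermann level of the bound. Verifying that the final bound does not blow up past what the $\mathrm{FIN}_k$ theorem itself provides (namely $f_{4+2(k-1)}\circ f_4(6m-2)$) requires careful bookkeeping of $k$, $r$, and the doubling caused by sign splitting, and is the main quantitative subtlety of the reduction; in the special case of $\ell_\infty^n$ restricted to the positive sphere the sign-splitting step is avoided altogether, which is precisely why a much smaller bound is obtainable there.
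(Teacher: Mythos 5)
Your reduction runs in the wrong direction and breaks at its central step. Theorem \ref{finite_stab1} is about an \emph{arbitrary} $n$-dimensional normed space $F$ with basis constant at most $C$, and for your argument to close you need the image of a monochromatic combinatorial subspace $\langle f_i\rangle_{i<m}\subseteq\mathrm{FIN}_k$ to be an $\epsilon/(3C)$-net for the \emph{whole} unit sphere of the block subspace $[\mathbf{y}_i]_{i<m}$. That is true (and is Gowers' point) for the positive sphere of the sup norm, because there the norm of a sum of disjointly supported blocks is the maximum of their norms and the Tetris operation corresponds to scaling by a fixed ratio, so the combinatorial span stays on the sphere and sweeps out a net. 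For a general norm it fails outright: the combinatorial operations do not preserve the unit sphere (the norm of $T^{l_1}f_{i_1}+\cdots+T^{l_n}f_{i_n}$ is uncontrolled between roughly $1$ and $n$), nothing forces your truncated coefficient vectors $k^{-1}\lfloor kb_i\rfloor$ to attain the maximum value $k$ required for membership in $\mathrm{FIN}_k$, and in, say, $\ell_1^m$ the normalized average $\frac{1}{m}(\mathbf{y}_0+\cdots+\mathbf{y}_{m-1})$ lies on the sphere but far from every element of the image of the combinatorial span. Separately, your sign-splitting does not reduce to two independent applications of Theorem \ref{main1}: the positive and negative parts of the vectors in a combinatorial span are coupled by the normalization $\min\{l_1,\ldots,l_n\}=0$ and by the interleaving of supports, and homogenizing one coordinate of a pair-coloring and then the other does not homogenize the joint coloring. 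This is precisely why Gowers had to introduce the modified structure $\mathrm{FIN}_{\pm k}$ and could only prove an approximate partition theorem for it.

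The paper does not prove Theorem \ref{finite_stab1} via $\mathrm{FIN}_k$ at all; the general statement is quoted from \cite{FDDs}, and Section \ref{finite_stabilization} proves only the special case of the positive sphere of $\ell_\infty^n$ (Theorem \ref{finite_stab}) by a completely different route: Ramsey's theorem for $l$-tuples to make $f$ $\epsilon$-almost spreading on a large coordinate set (Claim \ref{spreading}), followed by an explicit construction of a block sequence whose positive sphere has small diameter in the distribution pseudometric $dis(\cdot,\cdot)$ (Claims \ref{pre_diameter} and \ref{diameter}). This is what produces the tower-type bound $f_3(\cdot)$. Your closing remark inverts the quantitative moral of the paper: the much smaller bound for $\ell_\infty^n$ is not obtained by "avoiding sign-splitting" in a $\mathrm{FIN}_k$ argument, but by avoiding $\mathrm{FIN}_k$ altogether --- a $\mathrm{FIN}_k$-based deduction, even where it is valid, costs $f_{4+2(k-1)}\circ f_4(6m-2)$ with $k$ of order $C/\epsilon$, which is exactly the comparison the paper is drawing.
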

Let $N(C,\epsilon,m)$ be the minimal $n$ such that for every $C$-Lipschitz function $f:PS_{\ell_{\infty}^n}\to\R$, there is a block sequence $(\mathbf{y}_i)_{i=0}^{m-1}$ of positive vectors such that $\mbox{osc}(f\upharpoonright PS_{[\mathbf{y}_i]_{i=0}^{m-1}})<\epsilon$. By analysing the proof of Theorem \ref{finite_stab1} in \cite{FDDs} for the especial case of the spaces $\ell_{\infty}^n$, $n\in\N$, we obtain the following upper bound for $N(C,\epsilon,m)$:
 \[
 	N(C,\epsilon,m)\lesssim f_3\left(m^s\cdot\lceil\frac{C}{\epsilon}\rceil^{m^s}\right),
 \]
 where $s=\log(\epsilon/12C)/\log(1-\epsilon/12C)+2$. For fixed $\epsilon$ and $C$, this upper bound is much slower growing than the bound we found for $g_k(m,2)$ for a fixed $k\geq 2$.\\
 
 The paper is organized as follows. In Section \ref{the_proof} we present the proof of Theorem \ref{main1}, in Section \ref{bounds} we obtain the upper bounds given by the arguments in Section \ref{the_proof}. Finally, in Section \ref{finite_stabilization} we introduce the necessary concepts related to the stabilization of Lipschitz functions, and modify the proof of Theorem \ref{finite_stab1} presented in \cite{FDDs} so as to use only finitary arguments. With this modified proof we give upper bounds for $N(C,\epsilon,m)$.
 
 \section{The finite $\mathrm{FIN}_k$ Theorem}\label{the_proof}
	We start by fixing some notation. We denote by $\N$ the set of natural numbers starting at zero and use the Von-Neumann identification of a natural number $n$ with the set of its predecessors, $n=\{0,1,\ldots,n-1\}$. Let $k\in\N$ be given. For $N,d\in\N$	we define the finite version of $\mathrm{FIN}_k$ and its d-dimensional version by:
	\begin{eqnarray*}
	\mathrm{FIN}_{k}(N)&=& \{f\in \mathrm{FIN}_k: \max(\mathrm{supp}(f))<N\}\\
	\mathrm{FIN}_k(N)^{[d]} &=& \{(f_i)_{i<d}|\mbox{ } f_i\in\mathrm{FIN}_{k}(N)\mbox{ and }f_i<f_j\mbox{ for }i<j<d\}.
	\end{eqnarray*}
	
	Where for $f,g\in\mathrm{FIN}_k(N)$, we write $f<g$ when $\max (supp(f))<\min(supp(g))$. The elements of $\mathrm{FIN}_k(N)^{[d]}$ are called block sequences. The combinatorial space $\langle f_i\rangle_{i<d}$ generated by a sequence $(f_i)_{i<d}\in \mathrm{FIN}_k(N)^{[d]}$ is the set of elements of $\mathrm{FIN}_k(N)$ of the form $T^{l_0}(f_{i_0})+\ldots+T^{l_{n-1}}(f_{i_{n-1}})$ where $n\in\N$, $i_0<\ldots<i_{n-1}< d$ and $\min\{l_0,\ldots,l_{n-1}\}=0$. A block subsequence of $(f_i)_{i<d}$ is a block sequence contained in $\langle f_i\rangle_{i<d}$. Just as we defined the $d$-dimensional version of $\mathrm{FIN}_k(N)$, if $(f_i)_{i<l}$ is a block sequence, we define $(\langle f_i\rangle_{i<l})^{[d]}$ to be the collection of block subsequences of $(f_i)_{i<l}$ of length $d$.\\
	
	The following definition is important when coding an element of $\mathrm{FIN}_k$ in a sequence of elements of $\mathrm{FIN}_{k-1}$. Given $\textbf{f}=(f_i)_{i<m}\in \mathrm{FIN}_k^{[m]}$, for
	\[
		g=\sum_{i<m}T^{k-n_i}f_i,
	\]
	we define $supp_k^{\textbf{f}}(g)$ to be the set of all $i<m$ such that $n_i=k$. The cardinality of this set determines the length of the sequence we need in order to code $g$, as we shall describe in detail later on. The proof is by induction on $k$. The starting point is Folkman's Theorem. In the inductive step, the idea is to code an element of $\mathrm{FIN}_k$ in a finite sequence of elements of $\mathrm{FIN}_{k-1}$ and apply the result for $\mathrm{FIN}_{k-1}$ and its higher dimensional versions.\\
	
	We identify canonically $\mathrm{FIN}_1$ with $\mathrm{FIN}$, the collection of finite subsets of $\N$. The case $k=1$ of the finite $\mathrm{FIN}_k$ Theorem, phrased in terms of finite sets and finite unions, is a restatement of Folkman's Theorem. We include a proof of Folkman's Theorem for the sake of completeness and more importantly because we are interested in analysing the corresponding upper bounds. The proof presented here is extracted from \cite[Section 3.4]{RamseyTheory}.
	
	\begin{namedtheorem}[Folkman]
		For all $m,r\in\N$ there exists $N\in\N$ such that for every $c:\mathrm{FIN}(N)\to r$ there exists $(x_i)_{i<m}\in \mathrm{FIN}(N)^{[m]}$ such that $c\upharpoonright \langle x_i\rangle_{i<m}$ is constant. 
	\end{namedtheorem}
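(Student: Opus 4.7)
The plan is to prove Folkman's theorem by induction on $m$, with the Hales-Jewett theorem serving as the Ramsey-theoretic engine at each step. The base case $m = 1$ is trivial: taking $N = 1$ makes $\mathrm{FIN}(1)$ a single point whose combinatorial span equals itself, so every coloring is monochromatic on it.

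For the inductive step I would identify $\mathrm{FIN}(N)$ with $\{0,1\}^N \setminus \{\mathbf{0}\}$ via characteristic functions. Under this identification, a block sequence $x_0 < \cdots < x_{m-1}$ whose span of $2^m - 1$ nonempty unions is monochromatic corresponds exactly to a monochromatic $m$-dimensional combinatorial subspace of $\{0,1\}^N$, with the variable blocks given by the supports of the $x_i$'s. So Folkman's theorem is equivalent to the binary-alphabet, $m$-dimensional case of the Graham-Rothschild theorem, which I would derive from the one-dimensional Hales-Jewett theorem by a secondary induction on the dimension. Specifically, to extract a monochromatic $m$-dimensional subspace I would apply the statement for dimension $m-1$ after first inflating the color count to $r^{2^{m-1}}$ (recording which of the $2^{m-1}$ extensions of a candidate $(m-1)$-dimensional subspace gets which original color), and then use one further application of Hales-Jewett to extract the $m$th variable block so that all $2^{m-1}$ new extensions align with the previously-fixed color profile.

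The main obstacle will be the quantitative bookkeeping. Each step of the dimension induction inflates the color count by an exponential tower, and each invocation of Hales-Jewett consumes a substantial amount of length. The resulting tower pushes the bound for $N$ into the Ackermann hierarchy, which is consistent with the growth of $g_k(m,r)$ announced in the introduction. A secondary, more conceptual point to verify is that a monochromatic combinatorial line extracted from Hales-Jewett's output really translates into a monochromatic combinatorial subspace at the previous level of the dimension induction: the induced coloring has to record the full profile of colors over all $2^{m-1}$ extensions, rather than just the value on a single diagonal, and the encoding must be chosen accordingly.
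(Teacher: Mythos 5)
Your plan takes a genuinely different route from the paper. The paper derives Folkman's theorem by first proving a ``min-determined coloring'' lemma by induction on $m$, using Ramsey's theorem (to make the color depend only on cardinality within a suitable subset $A$) followed by van der Waerden's theorem (to find an arithmetic progression of cardinalities, from which the new first block $x_0$ and the remaining block sequence are built), and then finishing with pigeonhole. Your plan goes instead via Hales--Jewett / Graham--Rothschild over the alphabet $\{0,1\}$.

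There is, however, a concrete gap. The claimed ``exact correspondence'' between block sequences with monochromatic span in $\mathrm{FIN}(N)$ and monochromatic $m$-dimensional combinatorial subspaces of $\{0,1\}^N$ fails in the direction you need. An $m$-dimensional combinatorial subspace of $\{0,1\}^N$ is specified by disjoint nonempty wildcard sets $W_0,\ldots,W_{m-1}$ together with a fixed word $u$ on the remaining coordinates; under the characteristic-function identification its points are the sets $\mathrm{supp}(u)\cup\bigcup_{i\in S}W_i$ for $S\subseteq m$. For this family to equal a combinatorial span $\langle x_i\rangle_{i<m}$ in $\mathrm{FIN}(N)$ you need two extra properties that the binary Graham--Rothschild theorem does not provide: (1) $\mathrm{supp}(u)=\emptyset$, i.e.\ the fixed part is the zero word, and (2) the $W_i$ are block, i.e.\ $\max W_i<\min W_{i+1}$. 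Nonzero cores $u$ occur generically (Hales--Jewett gives no control on the fixed part), and the wildcard sets can interleave arbitrarily --- for example if $W_0=\{0,2\}$ and $W_1=\{1,3\}$, then no two sets in their finite-union span are block, so no block subsequence of length $>1$ can be extracted from the span after the fact. Thus the disjoint-family statement you would obtain does not imply the block statement the paper proves.

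Both defects can be repaired, but this requires reshaping the reduction, not just bookkeeping. Working in the parameter-set framework over the singleton alphabet $\{0\}$ (rather than $\{0,1\}$) forces the fixed part to be zero; blockness then requires replacing the color-inflation induction you describe with an interval-splitting product argument, partitioning $N$ into consecutive intervals $B_0<\cdots<B_{m-1}$ and extracting $W_i\subseteq B_i$ from a separate Hales--Jewett application on each interval, with the color-inflation performed interval by interval so that the earlier intervals' extensions are recorded. Without these corrections, the proposal does not yield the statement as given. By contrast, the paper's Ramsey + van der Waerden route produces blockness automatically, since the new first block $x_0$ is an initial segment of $A$ and the remaining blocks are chosen in $A\setminus x_0$ afterwards.
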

	
	It easily follows from the Pigeon-Hole principle that the theorem reduces to the following:
	\begin{lem}\label{N}
		For every $m,r\in\N$ there exists $N\in\N$ such that for all $c:\mathrm{FIN}(N)\to r$ there exists $(x_i)_{i<m}\in \mathrm{FIN}(N)^{[m]}$ such that $c\upharpoonright \langle x_i\rangle_{i<m}$ is $\min$-determined. That is, if $x=\bigcup_{i\in s}x_i, y=\bigcup_{i\in t}x_i$ with $s,t\subseteq m$ such that $\min s=\min t$ then $c(x)=c(y)$. 
	\end{lem}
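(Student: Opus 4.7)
The plan is to prove Lemma~\ref{N} by induction on $m$. The base case $m=1$ is trivial: with $N=1$, the one-element block sequence $(\{0\})$ satisfies the min-determined condition vacuously.

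For the inductive step, I would assume that $N(m-1,\rho)$ exists for every $\rho\in\N$ and then construct $N=N(m,r)$ as follows. Split $[N]$ into a head segment $[0,N_0)$ and a tail segment $[N_0,N)$, and build $(x_i)_{i<m}$ by first choosing $(x_1,\ldots,x_{m-1})$ in the tail and then $x_0$ in the head. In the tail, apply the inductive hypothesis to the product coloring $\tilde c\colon\mathrm{FIN}([N_0,N))\to r^{2^{N_0}}$ defined by $\tilde c(A)=(c(F\cup A))_{F\subseteq[0,N_0)}$, using length $m-1$ with $r^{2^{N_0}}$ colors; this requires $N-N_0\geq N(m-1,r^{2^{N_0}})$. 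The resulting $(x_1,\ldots,x_{m-1})$ has the property that for every candidate $F\subseteq[0,N_0)$, the value $c(F\cup\bigcup_{i\in s}x_i)$ depends only on $\min s$, defining a ``type'' function $d_F\colon\{1,\ldots,m-1\}\to r$. For the head, one then seeks $x_0\subseteq[0,N_0)$ whose type $(c(x_0),d_{x_0})\in r^m$ is of the constant form $(a,a,\ldots,a)$ for some $a\in r$; this guarantees that the full block sequence $(x_0,x_1,\ldots,x_{m-1})$ is min-determined with respect to $c$.

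The main obstacle is the head step: isolating an $x_0\subseteq[0,N_0)$ whose type is of the constant form. Since only $r$ of the $r^m$ possible types are ``good'', direct pigeonhole does not suffice. The resolution is to apply the Hales--Jewett theorem (equivalently, Graham--Rothschild for alphabet $\{0,1\}$) to the type coloring on $[0,N_0)$, producing a combinatorial subspace of candidates all sharing an invariant type; iterating Hales--Jewett and then pigeonholing over the shared type forces an $x_0$ of the desired constant type to appear. This bookkeeping is what drives $N_0$ to grow towerlike in $m$ and $r$, and is ultimately responsible for the Ackermann-level upper bounds on $g_k$ recorded later in the paper.
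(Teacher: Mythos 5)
Your reduction of the tail to the inductive hypothesis via the product coloring $\tilde c(A)=(c(F\cup A))_{F\subseteq[0,N_0)}$ is sound, and you correctly identify that everything then hinges on finding a head set $x_0$ whose type $(c(x_0),d_{x_0}(1),\ldots,d_{x_0}(m-1))$ is constant. But the proposed resolution of that step cannot work, and the obstruction is not bookkeeping: no partition argument carried out \emph{inside the head alone} can force the coordinate $c(x_0)$ to agree with the coordinates $d_{x_0}(j)$, because the former only sees the restriction of $c$ to subsets of $[0,N_0)$ while the latter only see values of $c$ on sets meeting $[N_0,N)$, and these are independent pieces of data. Concretely, take $c(A)=0$ if $A\subseteq[0,N_0)$ and $c(A)=1$ otherwise. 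Then $\tilde c$ is constant on the tail, so $d_F(j)=1$ for every $F$ and every $j$, while $c(x_0)=0$ for every $x_0\subseteq[0,N_0)$; hence no $x_0$ in the head has constant type, no matter how large $N_0$ is and no matter how many times Hales--Jewett is applied. In fact, for this coloring no block sequence with $x_0\subseteq[0,N_0)$ and $x_1,\ldots,x_{m-1}$ meeting the tail is min-determined at all, so it is the architecture itself (tail chosen inside $[N_0,N)$ first, then a head element inside $[0,N_0)$) that fails, not merely the choice of partition theorem in the last step.

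The paper avoids this by coupling the head and the tail before separating them. It first applies Ramsey's theorem in every dimension $i<W(N(m,r),r)$ to produce a set $A$ on which $c$ depends only on cardinality, and then applies van der Waerden's theorem to the induced coloring of cardinalities to obtain a monochromatic arithmetic progression $\{\alpha+\lambda j: j<N(m,r)\}$. Taking $x_0$ to be the first $\alpha$ elements of $A$ and the remaining blocks of size $\lambda$, every union containing $x_0$, including $x_0$ itself (the term $j=0$), has cardinality in the progression and hence the same color; this is precisely the head--tail agreement your scheme is missing. If you wish to keep your inductive architecture, you need an analogous preprocessing step applied to the whole of $\mathrm{FIN}(N)$ that ties the value $c(x_0)$ to the values $c(x_0\cup y)$ for tail sets $y$.
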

	We denote by $N(m,r)$ the minimal $N$ given by Lemma \ref{N}. We shall use van der Waerden's Theorem. For $n,r\in\N$, let $W(n,r)$ be the minimal $m$ such that for any $r$-coloring of $m$ there is a monochromatic arithmetic progression of length $n$.
	
	\begin{proof}
		We fix the number of colors $r\in\N$ and proceed by induction on $m$, the length of the desired sequence. The base case $m=1$ is clear, so we suppose the statement holds for $m$ and prove it for $m+1$. By a repeated application of Ramsey's theorem, we fix $N\in\N$ such that given any $r$-coloring of $\mathrm{FIN}(N)$, there exists $A\subseteq N$ of cardinality $W(N(m,r),r)$ such that for all $i<W(N(m,r),r)$, the coloring $c$ is constant on $[A]^i$, the color possibly depending on $i$.\\
		
		Now let $c:\mathrm{FIN}(N)\to r$ be given and let $A\subset N$ be as above with $c\upharpoonright [A]^i$ constant with value $c_i<r$. Define $d:W(N(m,r),r)\to r$ by
		\[
		d(i)=c_i.
		\]
		We use Van der Waerden's Theorem to find $\alpha,\lambda<W(N(m,r),r)$ and $i_0<r$ such that $d\upharpoonright \{\alpha+\lambda j:j<N(m,r)\}$ is constant with value $c_{i_0}$. Let $x_0$ be the set consisting of the first $\alpha$ elements of $A$ and let $y_1<\ldots<y_{N(m,r)}$ be a block sequence of subsets of $A\setminus x_0$ each one of which has cardinality $\lambda$. Note that the combinatorial space generated by $(y_i)_{i<N(m,r)}$ is canonically isomorphic to $\mathrm{FIN}(N(m,r))$, therefore by induction hypothesis there exists a block subsequence $x_1<\ldots<x_m$ of $(y_i)_{0<i\leq N(m,r)}$ such that $c\upharpoonright \langle x_i\rangle_{i=1}^m$ is $\min$-determined.\\
		We shall see that $(x_i)_{i<m+1}$ is the sequence we are looking for. Fix $x,y\in\langle x_i\rangle_{0\leq i\leq m}$ with the same minimum. Suppose first that $x_0\subseteq x$ then also $x_0\subseteq y$ and $\#x=m+\lambda i, \#y=m+\lambda j$ for some $i,j<N(m,r)$. Hence $c(x)=c(y)=c_{i_0}$. Now suppose $x_0\nsubseteq x$ then the same holds for $y$ and consequently $x,y\in\langle x_i\rangle_{i=1}^{m}$. By the choice of $(x_i)_{i=1}^m$ it follows that $c(x)=c(y)$.
	\end{proof}
	We now prove Theorem \ref{main1} in its multidimensional form.
	\begin{thm}\label{main}
	For all $k,m,r,d\in\N$ there exists $n\in\N$ such that for every coloring $c:\mathrm{FIN}_k(n)^{[d]}\to r$ there exists $(f_i)_{i<m}\in \mathrm{FIN}_k(n)^{[m]}$ such that $c\upharpoonright(\langle f_i\rangle_{i<m})^{[d]}$ is constant.   
	\end{thm}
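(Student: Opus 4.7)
I proceed by induction on $k$, and within each $k$ by a secondary induction on $d$. The base case $(k,d) = (1,1)$ is Folkman's theorem (Lemma \ref{N}).

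\emph{Extending $d$ within fixed $k$.} Given the $d$-dimensional version of the theorem for $\mathrm{FIN}_k$, I would derive the $(d+1)$-dimensional version by iterative stabilization: starting from a sufficiently long block sequence, I select each $f_i$ one at a time while applying the $d$-dimensional hypothesis to a long tail so that the restricted coloring $c(g, \cdot, \ldots, \cdot)$ becomes monochromatic on $d$-block sequences above $g$ for every admissible head $g$ from the combinatorial space built so far; a final pigeonhole on the residual head coloring, together with a 1-dimensional application of the $\mathrm{FIN}_k$ theorem just proved, collapses everything onto a single color.

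\emph{Extending $k$.} Assume the multi-dimensional theorem for $\mathrm{FIN}_{k-1}$. Fix a background block sequence $\mathbf{f} = (f_i)_{i<N}$ in $\mathrm{FIN}_k(n)$ with $N$ large. Every $g \in \langle \mathbf{f}\rangle_{\mathrm{FIN}_k}$ decomposes uniquely into a nonempty top-level support $I = \mathrm{supp}_k^{\mathbf{f}}(g)$ and a residual part in $\mathrm{FIN}_{k-1}$ coming from $(T f_i)_{i \notin I}$. A $d$-block sequence from $\langle\mathbf{f}\rangle_{\mathrm{FIN}_k}$ is therefore coded by a $d$-tuple of pairwise disjoint, block-ordered top-level supports together with a $d$-block sequence of residual lower parts in $\mathrm{FIN}_{k-1}$. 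I would apply the multi-dim $\mathrm{FIN}_{k-1}$ hypothesis with sufficiently inflated target dimension and color count to stabilize the residual-lower-part coloring across all admissible top-level profiles simultaneously; a concluding application of the multi-dimensional Folkman (i.e.\ the $k=1$) case then stabilizes the remaining coloring on top-level profiles themselves, yielding $(f_i)_{i<m}$ as required.

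\emph{Main obstacle.} The inductive step $k-1 \to k$ is the delicate one: the coloring $c$ induces a family of colorings parameterized by top-level support profiles, of which there are combinatorially many as $N$ grows, so the multi-dim $\mathrm{FIN}_{k-1}$ hypothesis must be invoked with both its target dimension and target color count chosen to absorb the full collection of profiles. Verifying that the two stabilization stages---residual lower parts and top-level profiles---combine into a single monochromatic combinatorial subspace rather than into two incompatible monochromatic structures is the core bookkeeping challenge, and it is this iterated cascade of calls to lower-$k$ versions that produces the Ackermann-tower bounds announced in the introduction.
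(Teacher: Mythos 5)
Your high-level architecture matches the paper's: double induction, Folkman at the base, a standard diagonalization to raise $d$, and a two-stage stabilization in the $k-1 \to k$ step (first make $c$ depend only on the top-level support, then apply Folkman to the induced coloring on supports). The diagonalization step for increasing $d$ is essentially the same as in the paper.

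The genuine gap is in the mechanism for the $k-1\to k$ step, and you even flag the symptom yourself: the ``combinatorially many'' top-level profiles. Your proposed fix---inflate the target dimension and color count of the multi-dimensional $\mathrm{FIN}_{k-1}$ hypothesis to ``absorb'' all profiles---does not obviously work, because the number of profiles grows like $2^N$ while you need $N$ to be large to begin with; there is no uniform inflation that covers them. There is also a smaller defect in the decomposition itself: the residual of $g=\sum_i T^{k-n_i}f_i$ after removing the top-level pieces is $\sum_{i\notin I}T^{k-n_i}f_i$, which in general lives in $\mathrm{FIN}_m$ for some $m<k-1$, or is zero; it is not canonically an element of $\mathrm{FIN}_{k-1}$, so a ``$d$-block sequence of residuals in $\mathrm{FIN}_{k-1}$'' is not a well-posed object to feed to the $\mathrm{FIN}_{k-1}$ hypothesis.

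The paper's resolution is a different encoding, not a (support, residual) pair. It introduces the lift $U:\mathrm{FIN}_k\to\mathrm{FIN}_{k+1}$, $(Uf)(i)=f(i)+1$ when $f(i)\neq 0$, builds the background sequence in triples $h_i=f_{3i}+Uf_{3i+1}+f_{3i+2}$, and observes that any $g\in\langle h_i\rangle$ with top-level support of size $l$ can be written as $\sum_{j<2l+1}U^{j\bmod 2}w_j$ for a genuine block sequence $(w_j)_{j<2l+1}$ in $\mathrm{FIN}_k$. This replaces the $2^N$-indexed family of colorings by an $N$-indexed family $e_{l-1}:\mathrm{FIN}_k(\bar N)^{[2l+1]}\to r$, one for each top-level support \emph{size} rather than each \emph{profile}, and the multi-dimensional $\mathrm{FIN}_k$ hypothesis (in dimensions $3,5,\dots,2N+1$ simultaneously) stabilizes all of them at once. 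That is the missing device you need to make your stage one go through.
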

	 Let $g_{k,d}(m,r)$ be the minimal $n$ given by Theorem \ref{main}. We prove the theorem by induction on $k$. Note that if we have the theorem for some $k$ and $d=1$, we can deduce the theorem for $k$ and dimensions $d>0$ using a standard diagonalization argument. We include the dimensions in the statement of the theorem because they play an important role in the proof and because we are interested in calculating upper bounds for $g_{k,d}(m,r)$. 
	\begin{proof}	 
	 The base case $k=1$ in dimension 1 is Folkman's Theorem. Suppose the theorem holds for $k$ and all $m,r,d\in\N$. We work to get the result for $k+1$. We need the following preliminary result:\\
	 
	\begin{claim}\label{barN}
	For all $N,r\in\N$ there exists $\bar{N}$ such that for every $c:\mathrm{FIN}_{k+1}(\bar{N})\to r$ there exists $\textbf{h}=(h_i)_{i<N}\in FIN_{k+1}(\bar{N})^{[N]}$ such that for
	\begin{eqnarray*}
		f &=&\sum_{i<N}T^{k+1-s_i}(h_i)\\
		g &=&\sum_{i<N}T^{k+1-t_i}(h_i),
	\end{eqnarray*}
	$c(f)=c(g)$ whenever $supp_{k+1}^{\textbf{h}}(f)=supp_{k+1}^{\textbf{h}}(g)$, that is, whenever for all $i<N$, $s_i=k+1$ if and only if $t_i=k+1$. 
	\end{claim}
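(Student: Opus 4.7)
The plan is to reduce Claim \ref{barN} to the multidimensional $\mathrm{FIN}_k$ theorem for $k$ (Theorem \ref{main} at level $k$, available by the outer induction on $k$) via the Tetris operation $T$ and a canonical ``lift''.

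First, I reformulate. For $f = \sum_{i<N} T^{k+1-s_i} h_i$ with $A := \{i : s_i = k+1\} \neq \emptyset$, the identity $T^{k+1-s_i} h_i = T^{k-s_i}(Th_i)$ for $s_i \in \{1,\ldots,k\}$ and $T^{k+1-s_i} h_i = 0$ for $s_i = 0$ gives the decomposition
\[
f \;=\; \sum_{i \in A} h_i \;+\; \sum_{i \notin A,\; s_i \geq 1} T^{k-s_i}(Th_i),
\]
where the second sum is an element of the ``weak'' combinatorial subspace of $(Th_i)_{i \in N \setminus A} \subseteq \mathrm{FIN}_k$ (and may be $0$ or lie in $\mathrm{FIN}_\ell$ for some $\ell < k$). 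Thus the claim asks for $\mathbf{h}$ such that, for each non-empty $A \subseteq N$, the color $c\bigl(\sum_{i \in A} h_i + g\bigr)$ is independent of $g$.

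Next, I would start with a long block sequence $\mathbf{p} = (p_j)_{j<M}$ in $\mathrm{FIN}_{k+1}(\bar N)$ and its Tetris reduction $\mathbf{p}' := (Tp_j)_{j<M} \in \mathrm{FIN}_k(\bar N)^{[M]}$. Every $q = \sum_j T^{k-t_j}(Tp_j) \in \langle \mathbf{p}' \rangle$ admits a canonical lift $\hat q := \sum_j T^{k-t_j} p_j \in \langle \mathbf{p} \rangle$ with $T\hat q = q$ and $\mathrm{supp}_{k+1}^{\mathbf{p}}(\hat q) = \{j : t_j = k\}$. Let $X$ be the finite set of pairs $(A,\tau)$ with $\emptyset \neq A \subseteq N$ and $\tau: N \setminus A \to \{0,\ldots,k\}$, and define the auxiliary coloring on block subsequences $\mathbf{q} = (q_i)_{i<N}$ of $\mathbf{p}'$ by
\[
c^*(\mathbf{q})(A,\tau) \;:=\; c\Bigl(\sum_{i \in A} \hat q_i \;+\; \sum_{i \notin A,\; \tau(i) \geq 1} T^{k-\tau(i)} q_i\Bigr),
\]
valued in the finite set $r^X$. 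Applying the multidimensional $\mathrm{FIN}_k$ theorem at level $k$ in dimension $N$ with $|r^X|$ colors produces a refinement of $\mathbf{p}'$ on whose length-$N$ sub-block-sequences $c^*$ is constant; setting $h_i := \hat q_i$ then gives the candidate $\mathbf{h}$.

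The subtle point is that constancy of $c^*$ across sub-block-sequences does not immediately give the target $\tau$-independence of the common value $\phi(A,\tau)$ for fixed $A$. I expect to address this by iterating: process the non-empty $A \subseteq N$ one by one (say in decreasing order of $|A|$), and at each stage apply the multidimensional $\mathrm{FIN}_k$ theorem to the restricted coloring $g \mapsto c\bigl(\sum_{i \in A} \hat q_i + g\bigr)$ on the weak combinatorial space generated by $(q_i)_{i \notin A}$, thereby refining the $q_i$ for $i \notin A$ while leaving those for $i \in A$ intact. Tracking how the bound on $\bar N$ propagates through these $2^N - 1$ nested applications, in a way formalizable in PA and amenable to the Ackermann-hierarchy bookkeeping promised in the introduction, is where I expect the main technical burden to lie.
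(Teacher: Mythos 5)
Your overall direction is right --- the paper also reduces the claim to the multidimensional $\mathrm{FIN}_k$ theorem at level $k$ --- but the specific reduction you propose has a genuine gap that the paper avoids with a different encoding. The first step (stabilizing the map $c^*$ with values in $r^X$) gives that for every length-$N$ block subsequence $\mathbf q$ of the refinement, the value $c^*(\mathbf q)(A,\tau)$ equals a common $\phi(A,\tau)$; but nothing forces $\phi(A,\tau)=\phi(A,\tau')$ for $\tau\neq\tau'$, which is precisely what the claim needs, and there is no pair of block subsequences realizing $(A,\tau)$ and $(A,\tau')$ by the \emph{same} vector that would transfer the color. Your proposed repair --- iterating over the non-empty $A\subseteq N$, ``refining the $q_i$ for $i\notin A$ while leaving those for $i\in A$ intact'' --- is not available in this framework: block subsequences are chosen as a whole, and one cannot refine a subset of the positions of a length-$N$ block sequence while pinning the complementary positions and preserving the block order (the refined vectors must interleave the fixed ones). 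Even granting such a partial refinement, each stage replaces some $q_i$ by combinations, which alters $\hat q_i$ at other positions and so destroys the $\tau$-independence already secured for earlier sets $A'$; processing in decreasing $|A|$ does not fix this, since refining at positions in $N\setminus A$ changes $\hat q_i$ for $i\in A'\setminus A$.

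The paper sidesteps the $\tau$-independence problem entirely with a different decomposition. Instead of your splitting $f=\sum_{i\in A}h_i+\sum_{i\notin A}T^{k-s_i}(Th_i)$, it introduces $U:\mathrm{FIN}_k\to\mathrm{FIN}_{k+1}$ (increment the nonzero coordinates) and colorings $e_i((w_j)_{j<2i+3})=c\bigl(\sum_{j<2i+3}U^{\,j\bmod 2}w_j\bigr)$, then takes a block sequence $(f_j)_{j<3N}$ in $\mathrm{FIN}_k$ on which each $e_i$ is constant (one application of the level-$k$ theorem handling all dimensions $2i+3$, $i<N$, at once) and sets $h_i=f_{3i}+Uf_{3i+1}+f_{3i+2}$. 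The point of the padding $f_{3i},f_{3i+2}$ flanking each $Uf_{3i+1}$ is that every $g\in\langle h_i\rangle$ with $|\mathrm{supp}_{k+1}^{\mathbf h}(g)|=l$ rewrites as $\sum_{j<2l+1}U^{\,j\bmod 2}w_j$ for a length-$(2l+1)$ block subsequence $(w_j)$ of $(f_j)$, with the ``valley'' terms $w_j$ (even $j$) always containing a whole $f_{3i}$ or $f_{3i+2}$ and hence landing in $\langle f_j\rangle$. Constancy of $e_{l-1}$ then makes $c(g)$ depend only on $l$ --- in fact only on $|\mathrm{supp}_{k+1}^{\mathbf h}(g)|$, which is even stronger than the claim --- and the $\tau$-dependence you were fighting is absorbed into the free choice of $(w_j)$. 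This alternating-$U$ encoding is the key idea your proposal is missing; without it, I do not see a way to make the iteration over $A$'s converge.
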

	Let $\bar{N}_{k+1}(N,r)$ be the minimal $\bar{N}$ given by Claim \ref{barN}.
	
	\begin{proof}[Proof of Claim \ref{barN}] 
	Let $N,r\in\N$ be given. By induction hypothesis, let $\bar{N}$ be such that for any sequence of $r$-colorings $(e_i)_{i<N}$ with $e_i:\mathrm{FIN}_k(\bar{N})^{[2i+3]}\to r$, there exists a block sequence $(f_j)_{j<3N}$ such that for each $i<N$, $e_i$ is constant on $(\langle f_j\rangle_{j<3N})^{[2i+3]}$, its value possibly depending on $i$.\\
	Let $c:\mathrm{FIN}_{k+1}(\bar{N})\to r$ be given. Define $U:\mathrm{FIN}_k\to\mathrm{FIN}_{k+1}$ by
	\[
		(Uf)(i)=\begin{cases}
					f(i)+1 &\text{if } f(i)\neq 0\\
					0 &\text{otherwise.}
				\end{cases}	
	\]
	 For each $i<N$ define the coloring $e_i:\mathrm{FIN}_k(\bar{N})^{[2i+3]}\to r$ by
	\[
		e_i((h_j)_{j<2i+3})= c\left(\sum_{j<2i+3} U^{j\mbox{ }mod2}h_j\right),
	\]
	By the choice of $\bar{N}$, we can find a block sequence $(f_j)_{j<3N}$ such that for each $i<N$, $e_i$ is constant on $(\langle f_j\rangle_{j<3N})^{[2i+3]}$. We shall see that the sequence $(h_i)_{i<N}$ defined by $h_i=f_{3i}+Uf_{3i+1}+f_{3i+2}$ for $i<N$ is the sequence we are looking for. Let $g_1,g_2\in\langle h_i\rangle_{i<N}$ be such that $supp_{k+1}^{\textbf{h}}(g_1)=supp_{k+1}^{\textbf{h}}(g_2)$, let $l$ be the cardinality of $supp_{k+1}^{\textbf{h}}(g_1)$. Then we can write $g_1,g_2$ as
	\begin{eqnarray*}
		g_1&=&\sum_{j<2(l-1)+3}U^{j\mbox{ }mod2}w_j\\
		g_2&=&\sum_{j<2(l-1)+3}U^{j\mbox{ }mod2}w'_j
	\end{eqnarray*}
	for some $(w_j)_{j<2(l-1)+3}, (w'_j)_{j<2(l-1)+3}\in (\langle f_j\rangle_{j< 3N})^{[2(l-1)+3]}$. Since $e_{l-1}$ is constant on $(\langle f_j\rangle_{j<3N})^{[2(l-1)+3]}$, it follows that $c(g_1)=c(g_2)$.
	\end{proof}
	
	We now verify that for the case $k+1$, $d=1$ in Theorem \ref{main}, we may take $n=\bar{N}_{k+1}(H,r)$ where $H=g_{1,1}(m,r)$ . Let $c:\mathrm{FIN}_k(n)\to r$ be given. By the choice of $n$ we can find $\textbf{h}=(h_i)_{i<H}$ such that $c\upharpoonright\langle h_i\rangle_{i<H}$ depends only on $supp_{k+1}^{\textbf{h}}$. Let 
	\begin{eqnarray*}
		d:\mathcal{P}(H)&\to& r\\
		x&\mapsto & c\left(\sum_{i\in x} h_i\right)
	\end{eqnarray*}
	By the choice of $H$ we can find $x_0<\ldots <x_{m-1}$ subsets of $H$ such that $d\upharpoonright \langle x_i\rangle_{i<m}$ is constant. For $i<m$ let $f_i=\sum_{j\in x_i}h_j$. Note that for $f\in\langle f_i\rangle_{i<m}$, $supp_{k+1}^{\textbf{h}}(f)$ is a finite union of $x_0,\ldots x_{m-1}$. Therefore $c\upharpoonright\langle f_i\rangle_{i<m}$ is constant.
	\end{proof}
	
	\section{Bounds for the finite $\mathrm{FIN}_k$ Theorem}\label{bounds}
	In this section we calculate upper bounds for the numbers $g_{k,d}(m,r)$ given by the proof in Section \ref{the_proof}. Since we used Ramsey's Theorem and Van der Waerden's Theorem in our arguments, we will need upper bounds for the numbers corresponding to these two theorems. For Ramsey's Theorem, $R_d(m)$ is the minimal $n$ such that if $[n]^d$, the collection of subsets of $n$ of cardinality $d$, is 2-colored then there exists a monochromatic set of cardinality $m$. For Van der Waerden's Theorem, $W(m)$ is the minimal $n$ such that if $n$ is 2-colored, then there exists a monochromatic arithmetic progression of length $m$. For a discussion of Ramsey numbers and Van der Waerden numbers, see \cite[Ch. 4]{RamseyTheory}. It turns out that these numbers grow very rapidly and, in order to deal with such rapidly growing functions, we use the Ackermann Hierarchy. The Ackermann hierarchy is the sequence of functions $f_i:\N\to\N$ defined as follows:
	\begin{eqnarray*}
		f_1(x)&=& 2x\\
		f_{i+1}(x)&=& f_i^{(x)}(1)
	\end{eqnarray*}
	Already the function $f_3$ grows very fast with $f_3(5)=2^{65536}$ a number with nearly 20,000 decimal digits (see \cite[section 2.7]{RamseyTheory}). The function $f_3$ is called TOWER and $f_4$ is called WOW. The Ackermann function is obtained by diagonalization and grows even faster than any $f_i$, $i\in\N$:
	\[
		f_{\omega}(x)=f_x(x)
	\] 
	There is a slight variation of the function TOWER in the Ackermann hierarchy, it is useful to express upper bounds for the Ramsey numbers $R_d(m)$ and the Van der Waerden numbers $W(m)$. The tower functions $t_i(x)$ are defined inductively by
	\begin{eqnarray*}
		t_1(x)&=& x\\
		t_{i+1}(x)&=&2^{t_i(x)}
	\end{eqnarray*}
	We shall use the following well known upper bounds for $R_d(m)$ and $W(m)$:
	\begin{eqnarray}
		R_d(m)&\leq & t_d(c_dm)\label{Ramsey_bound}\\
		W(m)&\leq & 2^{2^{2^{2^{2^{m+9}}}}}=t_6(m+9)\label{VdW_bound} 
	\end{eqnarray}
	Where $c_d$ is a constant that depends on $d$. See \cite[Section 4.7]{RamseyTheory} for a deduction of the bound for $R_d(m)$. The bound for $W(m)$ was found by Gowers in \cite{Gowers_bound}.\\
		
We now analyse the proof of Theorem \ref{main} presented in Section \ref{the_proof} to get some information about the corresponding upper bounds. We consider only 2-colorings and so we will omit the number of colors in the arguments of our functions and write $g_{k,d}(m)$ for $g_{k,d}(m,2)$. That is, $g_{k,d}(m)$ is the minimal $n$ such that for every coloring $c:\mathrm{FIN}_k(n)^{[d]}\to 2$ there exists $(f_i)_{i<m}\in \mathrm{FIN}_k(n)^{[m]}$ such that $c\upharpoonright(\langle f_i\rangle_{i<m})^{[d]}$ is constant. We adopt the same convention for any other numbers defined in the course of the proof of Theorem \ref{main} that have the number of colors as a parameter.\\

In what follows we will analyse each step in the proof presented in Section \ref{the_proof} and refer to the sequence of lemmas and claims presented therein. Recall that in the proof we proceeded by induction on $k$ and in the inductive step from $k$ to $k+1$ we used the inductive hypothesis in several dimensions $d>1$. Therefore we will first find the upper bounds corresponding to the base case $k=1$ in dimension 1, and then describe the diagonalization argument to obtain the result for $k=1$ and dimension 2. The arguments are similar for higher dimensions and so we get upper bounds for $g_{1,d}(m)$, $d>1$. To illustrate how the bounds behave when the value of $k$ increases, we analyse the inductive step in the proof and obtain an upper bound for $g_{2,1}(m)$. The arguments to pass from $k$ to $k+1$ and to increase the dimension are similar for bigger values of $k$ and so we get upper bounds for $g_{k,d}(m)$, $k\geq 2$, $d\geq 1$.\\
	
	We start by finding an upper bound for $N(m)$, the minimal $N\in\N$ such that for all $c:\mathrm{FIN}(N)\to r$ there exists $(x_i)_{i<m}\in\mathrm{FIN}(N)^{[m]}$ such that $c\upharpoonright \langle x_i\rangle_{i<m}$ is $\min$-determined. In the proof of Lemma \ref{N} we had to apply Ramsey's Theorem in dimensions $1,2,\ldots,W(N(m))$ in order to obtain a suitable set of cardinality $W(N(m))$. To easily iterate the upper bound \eqref{Ramsey_bound}, note that for any $i\in\N$ and any given constant $c$, if $x$ is big enough then we have that $t_i(cx)\leq t_{i+1}(x)$. Using these estimates we get the following recursive inequality:
	\begin{eqnarray*}
		N(m+1)&\leq& R_{W(N(m))}\circ\ldots R_2\circ R_1(W(N(m)))\\
		&\leq& t_{l+1}(W(N(m)))\\
		&=& t_{l+6}(N(m)+9)\\
		&\leq& t_{t_6(N(m)+10)}(N(m)+9)\\
		&\leq& t(t_6(N(m)+10)+N(m)+9)\\
		&\leq& t(t_6(N(m)+11 ))\\
		&\leq& t^3(N(m))
	\end{eqnarray*}
	Where $l$ counts the index of the tower function resulting from the iteration of the bound for $R_d(m)$, that is 
	\[
		l=\sum_{i=1}^{t_6(m+9)}i.
	\]
	From the recursive inequality for $N(m)$, we get that
	\begin{eqnarray*}
		N(m)&\leq & (t^3)^m(1)\\
		&\leq & f_4(3m).
	\end{eqnarray*}
	In order to obtain Folkman's Theorem from Lemma \ref{N}, we applied the Pigeon-Hole principle, and so we have that
	\begin{eqnarray}
		g_{1,1}(m)&\leq & N(2(m-1)+1)\\
		&\leq & f_4(6m-3).
	\end{eqnarray}
	We now work to obtain bounds for $g_{1,d}(m)$ for $d>1$. To increase the dimension by 1, we use a standard diagonalization argument which results in upper bounds of the form $g_{1,d}(m)\leq f_{5}^d(7m+2(d-1))$.\\ 
	
	We describe the diagonalization argument we used to obtain Theorem \ref{main} for $k=1$ and $d=2$ and calculate the resulting upper bound for $g_{1,2}(m)$. Let $c:\mathrm{FIN}(N)^{[2]}\to 2$ be given and let us calculate how big should $N$ be in order to ensure the existence of a block sequence of length $m$ generating a monochromatic combinatorial subspace. We define block sequences $S_0,\ldots, S_{p-1}$ and $a_0<\ldots< a_{p-1}$ where $p=g_{1,1}(m)$ with the following properties:
	\begin{itemize}
		\item[(i)] $S_0=\{\{0\},\ldots,\{N-1\}\}$,
		\item[(ii)] $a_j$ is the first element of $S_j$,
		\item[(iii)] For $j>0$, $S_j$ is a block subsequence of $S_{j-1}$ such that for each $x\in\langle a_i\rangle_{i<j}$, the coloring $c_x:\mathrm{FIN}(N\setminus x)\to 2$ of the finite subsets of $N\setminus x$ defined by
		\[
			y\mapsto c(x,y)
		\]
		is constant with value $i_x$ when restricted to $\langle S_j\rangle$,
		\item[(iv)] the sequence $S_{p-1}$ has length 2.
	\end{itemize}
	Each $S_j$, $0<j<p$ can be obtained by a repeated application of Theorem \ref{main} for $k=1$ in dimension 1. Let $S=\{a_j:j<p\}$ and consider the coloring $d:\langle S\rangle\to 2$ defined by
	\begin{equation*}
		x \mapsto i_x
	\end{equation*}
	By the choice of $p$, we can find a block subsequence of $S$ of length $m$ that generates a $d$-monochromatic combinatorial subspace, and by construction this sequence will also generate a $c$-monochromatic combinatorial subspace. Since the total number of refinements to obtain the sequences $(S_j)_{j<p}$ is $2^p-1$, it suffices to start with $N\geq g_{1,1}^{2^p-1}(2)$ and so 
	\begin{equation}
		g_{1,2}(m)\leq g_{1,1}^{2^p-1}(2)
	\end{equation}
	One can prove by induction on $l$ that $g_{1,1}^l(m)\leq f_4^l(6m+l-4)$, so we have that
\begin{eqnarray}
	g_{1,2}(m) &\leq & f_4^{2^p-1}(2^p+7)\label{12first}\\
	&\leq & f_4^{f_4(6m-2)}(f_4(6m-2))\\
	&\leq & f_4^{f_4(6m-2)+1}(6m-2)\\
	&\leq & f_5(f_4(7m))\\
	&\leq & f_5^2(7m).\label{12last}
\end{eqnarray}
	In general the recursive inequality resulting from the diagonalization argument is
	\begin{equation}
		g_{1,d+1}(m)\leq g_{1,1}^{h_d(g_{1,d}(m))}(2),
	\end{equation}
	where $h_d(l)$ for $l\in\N$, is the cardinality of $\mathrm{FIN}_1^{[d]}(l)$. Note that $h_d(l)\leq 2^{ld}$. From calculations like the ones in \eqref{12first}-\eqref{12last}, one gets that for $d\geq 2$
	\begin{equation}
		g_{1,d}(m)\leq f_5^d(7m+2(d-1)).
	\end{equation}
	Now we find an upper bound for $g_{k,1}(m)$, $k>1$. Recall that in the proof of Theorem \ref{main} we proceeded by induction on $k$. In the inductive step from $k$ to $k+1$ we used the higher dimensional versions of the result for $k$. We first found a subsequence where the coloring depends only on $supp_k$, which is the content of Claim \ref{barN}. We then applied Folkman's Theorem to obtain the desired sequence in $\mathrm{FIN}_{k+1}$.\\
	
	We consider the case $k=2$, the calculations for bigger values of $k$ are similar. Let $N\in\N$ be given, in order to establish Claim \ref{barN} for $N$ and $k=2$ we used Theorem \ref{main} for $k=1$ in dimensions $2i+3 (i<N)$. Using the notation in the proof, we see that
	\begin{eqnarray}\label{Nbar2}
		\bar{N}_{2}(N)&\leq & g_{1,2(N-1)+3}\circ\ldots\circ g_{1,5}\circ g_{1,3}(N)\\
		&\leq & g_{1,2(N-1)+3}^N(N)\\
		&\leq & f_5^{2N^2+N}(2N^2+10N-1)\label{powers_g1d}\\
		&\leq & f_6(4N^2+11N-1).
	\end{eqnarray}
	Where in \eqref{powers_g1d} we have used the inequality $g_{1,d}^l(m)\leq(7m+2(d-1)+(l-1)d)$, for $l,d\in\N$, $d>1$. From the final application of Folkman's Theorem we get
	\begin{eqnarray}
		g_{2,1}(m)&\leq &\bar{N}_2(g_{1,1}(m))\\
		&\leq & g_{1,2(g_{1,1}(m)-1)+3}\circ\ldots\circ g_{1,5}\circ g_{1,3}(g_{1,1}(m))\\
		&\leq & f_6(f_4(6m-2)).
	\end{eqnarray}
	For the case $k=2$, the bounds for the higher dimensional numbers we obtain are:
	\begin{equation}
		g_{2,d}(m)\leq f_7^d(f_4(6m-2)+2(d-1)),
	\end{equation}
	In general from the inductive step we get that for any $k\in\N$,
	\begin{eqnarray}
		\bar{N}_{k+1}(N)&\leq& g_{k,2(N-1)+3}\circ\ldots\circ g_{k,5}\circ g_{k,3}(N),\mbox{ and}\label{barNbound}\\
		g_{k+1,1}(m)&\leq& g_{k,2(g_{1,1}(m)-1)+3}\circ\ldots\circ g_{k,5}\circ g_{k,3}(g_{1,1}(m)).\label{k+11bound}
	\end{eqnarray}
	The diagonalization argument used to increase the dimension from $d$ to $d+1$ in the case $k=1$ is similar for bigger values of $k$ so we get that
	\begin{equation}\label{kd+1bound}
		g_{k,d+1}(m)\leq g_{k,1}^{h_{k,d}(g_{k,d}(m))}(2),
	\end{equation}
	where $h_{k,d}(l)$ for $l\in\N$, is the cardinality of $\mathrm{FIN}_k^{[d]}(l)$. Note that $h_{k,d}(l)\leq dl^k$. Using \eqref{barNbound}, \eqref{k+11bound} and \eqref{kd+1bound}, we can carry out similar calculations as the ones presented for the case $k=2$ to obtain:\\
	\begin{eqnarray}
		g_{k,1}(m)&\leq &f_{4+2(k-1)}\circ f_4(6m-2),\\
		g_{k,d}(m)&\leq &f_{5+2(k-1)}^d(f_4(6m-2)+2(d-1)),
	\end{eqnarray}
	where $d>1$. We summarize in the following table the upper bounds we obtain.
	
\begin{center}	
	\begin{tabular}{|c|c|l|}
		\hline
		$k$ & \textit{Dimension} & \textit{Upper bound}\\
		\hline
		\multirow{4}{*}{2} & 1 & $f_6\circ f_4(6m-2)$ \\
 & 2 & $f_7^2(f_4(6m-2)+2)$ \\
 & $\vdots$ & \\
 & d & $f_7^d(f_4(6m-2)+2(d-1))$ \\ \hline
 		\multirow{4}{*}{3} & 1 & $f_8\circ f_4(6m-2)$ \\
 & 2 & $f_9^2(f_4(6m-2)+2)$ \\
 & $\vdots$ & \\
 & d & $f_{9}^d(f_4(6m-2)+2(d-1))$ \\ \hline
 \vdots& & \\  \hline
 \multirow{4}{*}{k} & 1 & $f_{4+2(k-1)}\circ f_4(6m-2)$ \\
 & 2 & $f_{5+2(k-1)}^2(f_4(6m-2)+2)$ \\
 & $\vdots$ & \\
 & d & $f_{5+2(k-1)}^d(f_4(6m-2)+2(d-1))$ \\ \hline
	\end{tabular}
\end{center}	
\section{Bounds for the finite stabilization theorem}\label{finite_stabilization}
As we mentioned in the introduction, Gowers formulated and proved the $\mathrm{FIN}_k$ Theorem to obtain the stabilization of Lipschitz functions on the positive sphere of $c_0$. Let us introduce some notions from Banach space theory in order to talk about stabilization of Lipschitz functions and see how this relates to the $\mathrm{FIN}_k$ Theorem. Given a Banach space $X$ with Schauder basis $(\mathbf{x}_i)_{i\in I}$, $I=\N$ or $I=n$ for some $n\in\N$, for $\mathbf{x}=\sum _{i\in I}a_i\mathbf{x_i}\in X$ we define the \emph{support} $supp(\mathbf{x})$ of $\mathbf{x}$ by $supp(\mathbf{x})=\{i\in I: a_i\neq 0\}$, we say $\mathbf{x}$ is \emph{positive} with respect to $(\mathbf{x}_i)_{i\in I}$ if each $a_i$ is non negative. A sequence of vectors $(\mathbf{y}_i)_{i\in J}$ with $J=\N$ or $J=n$ for some $n\in\N$ is a \emph{block subsequence} of $(\mathbf{x}_i)_{i\in I}$ if each $\mathbf{y}_i$ has finite support and $\max supp(\mathbf{y}_i)<\min supp(\mathbf{y}_{j})$ for every $i<j\in J$. The sequence $(\mathbf{y}_i)_{i\in I}$ is \emph{positive} with respect to $(\mathbf{x}_i)_{i\in I}$ if each $\mathbf{y}_i$ is positive, and it is \emph{normalized} if each $\mathbf{y}_i$ has norm 1. A subspace generated by a positive normalized block sequence is called a \emph{positive subspace}. The \emph{positive unit sphere} of a positive subspace $Y$ of $X$, denoted by $PS_Y$, is the set of positive vectors in the unit sphere of $Y$. We say that a Lipschitz function $f:S_{X}\to\R$ \emph{stabilizes on the positive sphere} if for every $\epsilon>0$ there exists an infinite dimensional positive subspace $Y$ such that
\[
	\mbox{osc}(f\upharpoonright PS_Y)=\sup\{|f(\mathbf{x})-f(\mathbf{y})|:\mathbf{x},\mathbf{y}\in PS_Y\}<\epsilon.
\]
To obtain the stabilization of Lipschitz functions on the whole unit sphere of $c_0$, Gowers used a modification of the combinatorial structure $\mathrm{FIN}_k$ to account for the change of signs, and in this case proved an approximate Ramsey type theorem for it.\\

We say that a space $X$ is \emph{oscillation stable} if every Lipschitz function stabilizes on the unit sphere, that is, for every $\epsilon>0$ there exists an infinite dimensional subspace $Y$ such that
\[
	\mbox{osc}(f\upharpoonright S_Y)=\sup\{|f(\mathbf{x})-f(\mathbf{y})|:\mathbf{x},\mathbf{y}\in S_Y\}<\epsilon.
\]
It turns out that only "$c_0$-like" spaces have this property (see \cite[p. 1349]{Distortion_handbook}). However, given a Lipschitz function defined on the unit sphere of an infinite dimensional Banach space $X$, we can always pass to a subspace of any given finite dimension on the unit sphere of which the oscillation is as small as we want. This was first observed by Milman (see \cite[p.6]{Mil_Schech}). The following theorem gives the quantitative version of this fact for the positive sphere of $\ell_{\infty}^n$-spaces. Recall that the space $\ell_{\infty}$ is the space of bounded sequences of real numbers endowed with the sup norm, the space $\ell_{\infty}^n$ is $\R^n$ endowed with the sup norm.

\begin{thm}\label{finite_stab}
	For all $C,\epsilon>0$ and $m\in\N$ there is $n\in\N$ such that for every $C$-Lipschitz function $f:PS_{\ell_{\infty}^n}\to\R$ there is a positive block sequence $(\mathbf{y}_i)_{i<m}$ so that
	\[
		\mbox{osc}(f\upharpoonright PS_{[\mathbf{y}_i]_{i<m}})<\epsilon.
	\]
\end{thm}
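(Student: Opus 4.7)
The plan is to reduce Theorem \ref{finite_stab} to the finite $\mathrm{FIN}_k$ Theorem by discretizing both the range and the domain of $f$ and then cleaning up with Lipschitz estimates. First, since $f$ is $C$-Lipschitz and $PS_{\ell_\infty^n}$ has diameter at most $2$, the image of $f$ lies in an interval of length $\leq 2C$; I would partition this interval into consecutive pieces of length $\epsilon/4$, producing $r := \lceil 8C/\epsilon\rceil$ classes, and color each vector by the class of its $f$-value. Next, set $k = \lceil 4C/\epsilon\rceil$; for any $\mathbf{x}\in PS_{\ell_\infty^n}$ produce a companion $\tilde{\mathbf{x}}$ with coordinates in $\{0,1/k,\dots,1\}$ and maximum equal to $1$ by rounding each coordinate down to a multiple of $1/k$ and then raising the largest coordinate back to $1$. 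Then $\|\mathbf{x}-\tilde{\mathbf{x}}\|_\infty\leq 1/k\leq \epsilon/(4C)$, so $|f(\mathbf{x})-f(\tilde{\mathbf{x}})|\leq \epsilon/4$, and $k\tilde{\mathbf{x}}\in \mathrm{FIN}_k(n)$.

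The range-partition now induces an $r$-coloring of $\mathrm{FIN}_k(n)$; for $n\geq g_{k,1}(m,r)$ the finite $\mathrm{FIN}_k$ Theorem supplies a block sequence $(f_i)_{i<m}$ in $\mathrm{FIN}_k(n)$ whose combinatorial space is monochromatic. Setting $\mathbf{y}_i = f_i/k$ gives the required positive block sequence: any two $\mathbf{x},\mathbf{x}'\in PS_{[\mathbf{y}_i]_{i<m}}$ have discrete approximations $\tilde{\mathbf{x}},\tilde{\mathbf{x}}'$ lying in $(1/k)\cdot\langle f_i\rangle_{i<m}$, hence $|f(\tilde{\mathbf{x}})-f(\tilde{\mathbf{x}}')|<\epsilon/4$, and combining with the two Lipschitz estimates yields oscillation strictly below $\epsilon$.

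The main obstacle I anticipate is bookkeeping versus optimization: this crude reduction forces $k\approx C/\epsilon$, so the resulting bound sits at level $f_{4+2(k-1)}$ of the Ackermann hierarchy, far above the announced $N(C,\epsilon,m)\lesssim f_3(m^s\lceil C/\epsilon\rceil^{m^s})$ with $s=\log(\epsilon/12C)/\log(1-\epsilon/12C)+2$. To reach an $f_3$-level bound one must avoid the $\mathrm{FIN}_k$ hierarchy entirely and instead iterate Folkman's Theorem (the case $k=1$) about $s$ times: the expression for $s$ strongly suggests decomposing a positive unit vector into geometric layers with coefficients in bands of the form $[1-(1-\delta)^{j+1},1-(1-\delta)^j)$ where $\delta=\epsilon/(12C)$, handling each layer by a single Folkman application with $O(1/\delta)$ colors, and terminating once the residual falls below $\delta$. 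The delicate part is defining these layers so that Folkman applies to an essentially $\{0,1\}$-valued structure at every round, arranging that the successive refinements compose into one honest block sequence, and controlling the accumulated Lipschitz error across the $s$ rounds so that it stays below $\epsilon$.
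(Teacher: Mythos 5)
Your reduction to the finite $\mathrm{FIN}_k$ Theorem breaks at the step where you claim that the rounded vector $\tilde{\mathbf{x}}$ of a point $\mathbf{x}\in PS_{[\mathbf{y}_i]_{i<m}}$ lies in $(1/k)\cdot\langle f_i\rangle_{i<m}$. It is true that $k\tilde{\mathbf{x}}\in\mathrm{FIN}_k(n)$, but monochromaticity is only guaranteed on the combinatorial space $\langle f_i\rangle_{i<m}$, and that space is generated by sums and the Tetris operation, which \emph{subtracts} a constant from every nonzero coordinate; it is not closed under (approximate) scalar multiplication. Concretely, take $k$ even and $f_0$ with two support points carrying the values $k$ and $k/2$, so $\mathbf{y}_0=f_0/k$ has coordinates $1$ and $1/2$ there. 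For $\mathbf{x}=\tfrac12\mathbf{y}_0+\mathbf{y}_1$ the coordinates over $\mathrm{supp}(f_0)$ are $1/2$ and $1/4$, while every element of $(1/k)\langle f_j\rangle_{j<m}$ restricted to $\mathrm{supp}(f_0)$ is $(1/k)T^l(f_0)$ for some $l$, whose two coordinates are $(k-l)/k$ and $\max\{0,k/2-l\}/k$; forcing the first to be near $1/2$ forces the second to be near $0$, so $\mathbf{x}$ stays at sup-distance about $1/4$ from the combinatorial space no matter how large $k$ is. The standard fix (and the one Gowers uses to pass from $\mathrm{FIN}_k$ to oscillation stability) is to use the \emph{multiplicative} net: send $f\in\mathrm{FIN}_k$ to $\sum_{n\in\mathrm{supp}(f)}\lambda^{k-f(n)}\mathbf{e}_n$ with $\lambda=1-\epsilon/4C$ and $k\approx\log(\epsilon/4C)/\log\lambda$; then Tetris corresponds to multiplication by $\lambda$ (up to discarding coordinates below $\lambda^k<\epsilon/4C$), the combinatorial space really is an $\epsilon/4C$-net for $PS_{[\mathbf{y}_i]_{i<m}}$, and your argument goes through. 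As you note, even after this repair the bound lives at level $f_{4+2(k-1)}$ with $k$ depending on $C/\epsilon$, which is far worse than what the statement is meant to deliver.

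Your closing guess about how to reach an $f_3$-level bound does not match what actually works, and in particular no iteration of Folkman's Theorem is needed. The paper's proof avoids $\mathrm{FIN}_k$ entirely and splits into two independent steps. First, repeated applications of Ramsey's Theorem (one for each length $l<m$ and each element of an $\epsilon/3C$-net of coefficient sequences) produce a set $A$ of coordinates on which $f$ is $\epsilon/3$-almost \emph{spreading}: its value on $\sum_{i<l}a_i\mathbf{e}_{n_i}$ depends, up to $\epsilon/3$, only on the coefficient sequence $(a_i)$ and not on the positions $n_0<\dots<n_{l-1}$. Second, a purely deterministic construction: one exhibits block vectors $\mathbf{y}^{(n)}_r$ with nested, geometrically decaying ``pyramid'' coefficients such that \emph{every} positive unit vector in the span of $3^t$ of them is within $\epsilon/3C$, in the distribution pseudometric, of a single fixed distribution; the exponent $s$ in the bound records the recursion depth needed so that $r^{(s-1)t}<\epsilon/4$, not a number of partition-theorem iterations. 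Combining the two steps gives the oscillation bound with only one (iterated) use of Ramsey, hence the tower-type ($f_3$-level) estimate.
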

Let $n(C,\epsilon,m)\in\N$ be the minimal $n$ given by Theorem \ref{finite_stab}. This quantitative version is stated and proved in \cite{FDDs} for the sphere of arbitrary finite dimensional Banach spaces. At first, it seemed plausible that Theorem \ref{finite_stab} would suffice to prove the finite $\mathrm{FIN}_k$ Theorem. Given a coloring of $\mathrm{FIN}_k(n)$ for some $n\in\N$, one would have to define a function on a subset of the positive sphere of $\ell_{\infty}^n$ and extend this coloring to the positive sphere of $\ell_{\infty}^n$. Problems arise because we have no control over the Lipschitz constant of the resulting function on the positive sphere of $\ell_{\infty}^n$.\\

It is interesting to see how the bounds found in the previous section for the finite $\mathrm{FIN}_k$ Theorem compare to the bounds for Theorem \ref{finite_stab}. In what follows we shall outline the proof of Theorem \ref{finite_stab} as presented in \cite{FDDs} and calculate the resulting upper bound for the function $n(C,\epsilon,m)$. The argument is organized in two claims. The statement of the first one as we present it here, is slightly different from \cite{FDDs}; in its proof we use Ramsey's Theorem explicitly. We reproduce the argument for the second claim and provide the details that allow us to calculate the upper bounds.\\

Let $(\mathbf{e}_i)_{i\in\N}$ be the standard basis of $c_0$. 

\begin{claim}\label{spreading}
For any $l\in\N$, $C,\epsilon>0$, there exists $\bar{m}\in\N$ such that for any $C$-Lipschitz function $f:PS_{[ \mathbf{e}_{i}]_{i<\bar{m}}}\to\R$, there exists $A\subset \bar{m}$ of cardinality $m$ such that $f\upharpoonright PS_{[\mathbf{e}_i]_{i\in A}}$ is \emph{$\epsilon$- almost spreading}, that is, for any $n_0<\ldots<n_{l-1}, m_0<\ldots<m_{l-1}\in A$ , $l<m$ and any sequence of scalars $(a_i)_{i<l}$ such that $0< a_i\leq 1$ and $\max_i a_i=1$, we have that $|f(\sum_{i<l} a_i\mathbf{e}_{n_i})-f(\sum_{i<l} a_{i}\mathbf{e}_{m_i})|<\epsilon$. 
\end{claim}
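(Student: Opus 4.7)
The plan is a discretization plus Ramsey argument. Since $f$ is $C$-Lipschitz on the positive sphere, whose $\ell_\infty$-diameter is at most $2$, by subtracting a constant I may assume that $f$ takes values in an interval $J$ of length at most $2C$.

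First I would set $\delta=1/\lceil 4C/\epsilon\rceil$ (so that $\delta\leq\epsilon/(4C)$ and $1$ is a multiple of $\delta$), and form the finite set $G$ of tuples $(a_0,\ldots,a_{l-1})$ whose entries lie in $\{0,\delta,2\delta,\ldots,1\}$ with $\max_i a_i=1$; then $|G|\leq\lceil 4C/\epsilon\rceil^l$. I would also partition $J$ into $r=\lceil 8C/\epsilon\rceil$ subintervals of length at most $\epsilon/4$. For each $a\in G$ define $c_a:[\bar m]^l\to r$ by sending $\{n_0<\cdots<n_{l-1}\}$ to the index of the subinterval containing $f(\sum_{i<l} a_i\mathbf{e}_{n_i})$. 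The product coloring $(c_a)_{a\in G}$ uses at most $r^{|G|}$ colors, so by the multi-color version of Ramsey's theorem (which follows from iterated applications of the $2$-color version) it suffices to take $\bar m\geq R_l(m;r^{|G|})$ to obtain $A\subseteq\bar m$ of cardinality $m$ on which every $c_a$ is constant.

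Next I would verify the almost spreading property on $A$. Given $(a_i)_{i<l}$ with $0<a_i\leq 1$ and $\max_i a_i=1$, its coordinatewise truncation $a'_i=\lfloor a_i/\delta\rfloor\delta$ lies in $G$ (the index with $a_i=1$ yields $a'_i=1$) and satisfies $\max_i|a_i-a'_i|\leq\delta$. For any $n_0<\cdots<n_{l-1}$ and $m_0<\cdots<m_{l-1}$ in $A$, the Lipschitz estimate gives
\[
\Bigl|f\Bigl(\sum_{i<l} a_i\mathbf{e}_{n_i}\Bigr)-f\Bigl(\sum_{i<l} a'_i\mathbf{e}_{n_i}\Bigr)\Bigr|\leq C\delta\leq\epsilon/4,
\]
and the analogous bound with $n_i$ replaced by $m_i$. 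Since $c_{a'}$ is constant on $[A]^l$, the values $f(\sum_{i<l} a'_i\mathbf{e}_{n_i})$ and $f(\sum_{i<l} a'_i\mathbf{e}_{m_i})$ lie in a common subinterval of length $\epsilon/4$, so the triangle inequality yields $|f(\sum a_i\mathbf{e}_{n_i})-f(\sum a_i\mathbf{e}_{m_i})|\leq 3\epsilon/4<\epsilon$.

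The main obstacle is purely bookkeeping: ensuring that the truncation of $(a_i)$ stays in the prescribed net (which forces the precise choice of $\delta$ and the rounding rule) and combining the many colorings $c_a$ into a single Ramsey application. The resulting estimate $\bar m\leq R_l(m;r^{|G|})$ with $r^{|G|}$ of order $\lceil C/\epsilon\rceil^{\lceil C/\epsilon\rceil^l}$ fits into the tower estimate \eqref{Ramsey_bound} and is consistent with the $f_3$ factor appearing in the final upper bound for $N(C,\epsilon,m)$.
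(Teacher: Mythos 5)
Your approach is essentially the one the paper uses: discretize the coefficient sphere into a finite $\delta$-net, apply Ramsey's theorem to stabilize the interval that $f$ lands in for each net point, and then use the Lipschitz bound to transfer the conclusion from net points to arbitrary admissible scalars. The only organizational difference is that you bundle all net points into a single product coloring and invoke multi-colour Ramsey once, whereas the paper runs a separate Ramsey application for each net point; the two devices are interchangeable and lead to the same tower-type estimate, so this is not a substantive deviation.

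There is, however, one genuine gap. Throughout your argument you treat $l$ as a single fixed tuple length: the net $G$ consists of $l$-tuples, the colourings $c_a$ are on $[\bar m]^l$, and your estimate is $\bar m \leq R_l(m;r^{|G|})$. But the almost-spreading property, as the paper uses it in the proof of Theorem \ref{finite_stab} (and as the paper's own proof of Claim \ref{spreading} makes clear), must hold \emph{simultaneously for every} $l<m$, where $m$ is the cardinality of the homogeneous set $A$. The phrase ``For any $l\in\N$'' in the statement is evidently a slip for ``For any $m\in\N$'', since $m$ is otherwise unquantified and the quantity being bounded is $\bar m(C,\epsilon,m)$. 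Your argument as written produces an $A$ that is only spreading for one fixed $l$, which is strictly weaker than what is needed downstream. The fix is routine and compatible with your set-up: either iterate your single-$l$ argument for $l=1,\dots,m-1$ (shrinking $A$ each time), or take the product colouring over all $a\in\bigcup_{l<m}G_l$ and apply Ramsey in dimension $m-1$ with the correspondingly larger colour set. After this correction your bound $R_{m-1}\bigl(m;\lceil C/\epsilon\rceil^{\sum_{l<m}|G_l|}\bigr)$ matches the paper's estimate $\bar m(C,\epsilon,m)\leq f_3\bigl((m\lceil 3C/\epsilon\rceil^m)\cdot\lceil\log_2(3/\epsilon)\rceil\bigr)$ up to the usual slack in the tower estimate \eqref{Ramsey_bound}.
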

Let $\bar{m}(C,\epsilon,m)$ be the minimal $\bar{m}$ given by Claim \ref{spreading}.

\begin{proof}
	Let $\epsilon>0$. Given a $C$-Lipschitz function $f:PS_{[\mathbf{e}_i]_{i<d}}\to\R$, $d\in\N$ and a sequence of scalars $\textbf{a}=(a_i)_{i<l}$ such that $0< a_i\leq 1$ and $\max_i a_i=1$, we define a coloring $c_{f,\textbf{a}}$ of $[d]^l$ as follows: Let $(I_j)_{j<r}$ be a partition of the range of $f$ into intervals of length at most $\epsilon/3$, where $r=\lceil 3C/\epsilon\rceil$. Define $c_{f,\textbf{a}}:[d]^l\to r$ by $c_{f,\textbf{a}}(\left\{n_0,\ldots,n_{l-1}\right\}_{<})=j$ if and only if $f(\sum_{i<l}a_i \mathbf{e}_{n_i})\in I_j$. Note that if $A\subset d$ is homogeneous for $c_{f,\textbf{a}}$ then for any $n_0<\ldots<n_{l-1}, m_0<\ldots<m_{l-1}\in A$, we have that $|f(\sum_{i<l} a_ie_{n_i})-f(\sum_{i<l} a_{i}\mathbf{e}_{m_i})|<\epsilon/3$. We see that $\bar{m}$ should be big enough so that given a $C$-Lipschitz function $f:PS_{[\mathbf{e}_{i}]_{i<\bar{m}}}\to\R$, we can find $A\subset\bar{m}$ of cardinality $m$ that is homogeneous for colorings $c_{f,\textbf{a}}$ with $\textbf{a}$ ranging over some $\epsilon/3C$-nets of $PS_{[\mathbf{e}_i]_{i<l}}$ for $l<m$. For each $l<m$ we use an $\epsilon/3C$-net for $PS_{[\mathbf{e}_i]_{i<l}}$ of cardinality $\lceil3C/\epsilon\rceil^{l-1}$. Hence for each $l<m$ it suffices to use Ramsey's Theorem for $r$-colorings of $l$-tuples, consequently:

	\[
	\bar{m}(C,\epsilon,m)\leq f_3\left(\left(m\Big\lceil\frac{3C}{\epsilon}\Big\rceil^m\right)\cdot\Big\lceil\log_2\frac{3}{\epsilon}\Big\rceil\right)
	\]
\end{proof}
For the next step we need to introduce some notation. We say that $\mathbf{x},\mathbf{y}\in c_{00}$ have the \emph{same distribution}, denoted by $\mathbf{x}\overset{dis}{=}\mathbf{y}$ if $\mathbf{x}=\sum_{i<k}a_i\mathbf{e}_{n_i}$ and $\mathbf{y}=\sum_{i<k}a_i\mathbf{e}_{m_i}$ for some $k\in\N$, $(a_i)_{i<k}\subset\R$, and $n_0<\ldots<n_{k-1}$, $m_0<\ldots<m_{k-1}\in\N$. For $\mathbf{x},\mathbf{y}\in c_{00}$ let
	\[
		dis(\mathbf{x},\mathbf{y})=\inf\{||\mathbf{\bar{x}}-\mathbf{\bar{y}}||_{\infty}:\mathbf{\bar{x}}\overset{dis}{=}\mathbf{x},\mbox{ and }\mathbf{\bar{y}}\overset{dis}{=}\mathbf{y}\}.
	\]
For $0<r<1$ define recursively a sequence of finitely supported vectors $(\mathbf{y}^{(n)}_r)_{n\in\N}$ as follows. Let $\mathbf{y}^{(0)}_r=\mathbf{e}_0$ and assuming $\mathbf{y}^{(n)}_r=\sum_{i<l_n}y^{(n)}_r(i)\mathbf{e}_i$ is already defined we let
	\[	\mathbf{y}^{(n+1)}_r=\sum_{i<l_n}\left(r^{n+1}\mathbf{e}_{3i}+y^{(n)}_r(i)\mathbf{e}_{3i+1}+r^{n+1}\mathbf{e}_{3i+2}\right)
	\]
	(thus $\mathbf{y}^{(1)}_r=(r,1,r,0,\ldots), \mathbf{y}^{(2)}_r=(r^2,r,r^2,r^2,1,r^2,r^2,r,r^2,0,\ldots)$, etc.). Note that $\#supp(\mathbf{y}^{(n)}_r)=3^n$ for $n\in\N$. We shall need the following observation:\\
	\begin{claim}\label{pre_diameter}
		Let $t,s\in\N$, $0<r<1$, and let $(\mathbf{z}_l)_{l<3^t}$ be a block sequence of vectors with the same distribution as $\mathbf{y}^{(st)}_r$. Then for every linear combination $\mathbf{z}=\sum_{l<3^t}r^{\alpha_l}\mathbf{z}_l$ with at least one $\alpha_l=0$, there exists $\bar{\mathbf{z}}\overset{dis}{=}\mathbf{y}^{(st)}_r$ such that for every $j\leq (s-1)t$ and $i\in\N$ such that $\mathbf{z}(i)=r^j$, we have that $\bar{\mathbf{z}}(i)=r^{j+l}$ for some $0\leq l\leq t$. 
	\end{claim}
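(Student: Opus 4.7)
The plan is to prove the claim by induction on $s$, with $t$ and $r$ fixed. For the base case $s=1$ the condition is nontrivial only at positions where $\mathbf{z}(i)=1$, namely the unique centers of those $\mathbf{z}_l$ with $\alpha_l=0$. I would take $\bar{\mathbf{z}}$ supported on the $3^t$ centers of $\mathbf{z}_0,\ldots,\mathbf{z}_{3^t-1}$ (in increasing order), assigning the value $\mathbf{y}^{(t)}_r(l)$ to the center of $\mathbf{z}_l$. Then $\bar{\mathbf{z}}\overset{dis}{=}\mathbf{y}^{(t)}_r$ and at each center with $\alpha_l=0$ we have $\bar{\mathbf{z}}(i)=\mathbf{y}^{(t)}_r(l)=r^{l'}$ for some $0\le l'\le t$, fulfilling the condition.

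For the inductive step I would exploit that $\mathbf{y}^{(st)}_r$ arises from $\mathbf{y}^{((s-1)t)}_r$ by $t$ further applications of the recursion: each iteration replaces every current entry $v$ by the triple $(r^{\mathrm{level}},v,r^{\mathrm{level}})$, and after $t$ iterations each original entry becomes the center of a sub-vector of length $3^t$ whose $3^t-1$ non-central positions carry values $r^{(s-1)t+k}$, $k\in\{1,\ldots,t\}$, distributed like the non-central entries of $\mathbf{y}^{(t)}_r$. Applying this decomposition to each $\mathbf{z}_l$ I would split $\mathbf{z}_l=\mathbf{z}_l^{(C)}+\mathbf{z}_l^{(O)}$ where $\mathbf{z}_l^{(C)}\overset{dis}{=}\mathbf{y}^{((s-1)t)}_r$ is supported on the $3^{(s-1)t}$ centers; the family $(\mathbf{z}_l^{(C)})_{l<3^t}$ is itself a block sequence with $\min_l\alpha_l=0$, so the inductive hypothesis yields $\bar{\mathbf{z}}^{(C)}\overset{dis}{=}\mathbf{y}^{((s-1)t)}_r$ covering the \emph{very-large} positions of $\mathbf{z}$, namely those with $\mathbf{z}(i)=r^j$ for $j\le(s-2)t$.

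To upgrade $\bar{\mathbf{z}}^{(C)}$ to the required $\bar{\mathbf{z}}\overset{dis}{=}\mathbf{y}^{(st)}_r$, I would perform the $t$ iterations of the recursion, placing the new flanking positions inside $\mathrm{supp}(\mathbf{z})$ so as to absorb the remaining \emph{medium-large} positions, those with $\mathbf{z}(i)=r^j$ and $(s-2)t<j\le(s-1)t$. Writing $j=(s-1)t-c$ with $0\le c<t$, such a position placed in layer $k$ of its sub-vector gets $\bar{\mathbf{z}}(i)=r^{(s-1)t+k}$, which satisfies the required $\bar{\mathbf{z}}(i)=r^{j+l'}$ with $0\le l'\le t$ precisely when $k\le t-c$; thus medium-large positions at level $j$ must lie in one of the inner $(t-c)$ layers of their sub-vector. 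The main obstacle is exactly to realize this placement: one has to show that at each level $j$ the medium-large positions of $\mathbf{z}$ are no more numerous than the available inner-layer slots of $\bar{\mathbf{z}}$'s sub-vectors across $\bar{\mathbf{z}}^{(C)}$'s $3^{(s-1)t}$ support points, and then carry it out greedily in a way compatible with the nested order. Once this is done the very-large positions are automatic, since at sub-vector centers $\bar{\mathbf{z}}$ agrees with $\bar{\mathbf{z}}^{(C)}$ and the inductive hypothesis applies directly.
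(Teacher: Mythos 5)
Your plan goes in a genuinely different direction from the paper: you fix $t$ and induct on the height parameter $s$, whereas the paper fixes $s$ and inducts on $t$, reducing everything to the case of three blocks where the ``mountain'' decomposition \eqref{mountains} of $\mathbf{y}^{(s)}_r$ directly produces the required sub-vector. Your base case $s=1$ is correct, and the decomposition $\mathbf{z}_l=\mathbf{z}_l^{(C)}+\mathbf{z}_l^{(O)}$ together with the observation that all positions with $\mathbf{z}(i)=r^j$, $j\le(s-1)t$, lie in the support of the centers $\mathbf{z}_l^{(C)}$ is fine.

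However, the inductive step has a genuine gap, and you flag it yourself: you do not carry out the counting and placement argument needed to absorb the medium-large positions (those with $(s-2)t<j\le(s-1)t$) into the flanking layers of $\bar{\mathbf{z}}$. This is not a routine detail; it is the entire content of the step, and it is not clear a priori that a greedy assignment compatible with the nested $\mathbf{y}^{(st)}_r$ pattern exists, since the flanking coordinates of each sub-block must sit in a very rigid triadic arrangement around the center while the medium-large positions of $\mathbf{z}$ are scattered according to the $\alpha_l$'s. There is also a point you do not address at all: some of the $3^{(s-1)t}$ support points chosen for $\bar{\mathbf{z}}^{(C)}$ may themselves be medium-large positions of $\mathbf{z}$. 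At such a position $i$ you are forced to have $\bar{\mathbf{z}}(i)=\bar{\mathbf{z}}^{(C)}(i)$, and the inductive hypothesis guarantees $\bar{\mathbf{z}}^{(C)}(i)\le\mathbf{z}^{(C)}(i)$ only for very-large positions ($j\le(s-2)t$); you have no control ensuring $\bar{\mathbf{z}}^{(C)}(i)\le\mathbf{z}(i)$ at a medium-large one, so the required conclusion $\bar{\mathbf{z}}(i)=r^{j+l}$ with $l\ge 0$ can fail there. The paper's choice to induct on $t$ sidesteps both problems, because the three-block base case is handled in one shot by the explicit mountain structure, and the inductive step only has to glue three applications of the hypothesis with one application of the base case.
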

	\begin{proof}
		Let $s\in\N$ and $0<r<1$. We prove the claim by induction on $t$. For the base case $t=1$, let $(\mathbf{z}_l)_{l<3}$ be a block sequence of vectors distributed as $\mathbf{y}^{(s)}_r$. It is useful to note that 
\begin{eqnarray}\label{mountains}
	 \mathbf{y}^{(s)}_r&\overset{dis}{=}&r\overline{\mathbf{y}^{(s-1)}}+r^2\overline{\mathbf{y}^{(s-2)}}+\ldots r^{(s)}\overline{\mathbf{y}^{(0)}}+\overline{\overline{\overline{\mathbf{y}^{(0)}}}}\\
	 & &+r^{(s)}\overline{\overline{\mathbf{y}^{(0)}}}+\ldots+r^2\overline{\overline{\mathbf{y}^{(s-2)}}}+r\overline{\overline{\mathbf{y}^{(s-1)}}},\nonumber
\end{eqnarray}
where for $i<s$, $\overline{\mathbf{y}^{(i)}}\overset{dis}{=}\overline{\overline{\mathbf{y}^{(i)}}}\overset{dis}{=}\mathbf{y}^{(i)}_r$, $\overline{\overline{\overline{\mathbf{y}^{(0)}}}}\overset{dis}{=}\mathbf{e}_0$ and are such that there is no overlap or gaps between the supports of the terms of the sum. Let $\mathbf{z}=\sum_{l<3}r^{\alpha_l}\mathbf{z}_l$ with at least one $\alpha_l=0$. Since each $\mathbf{y}^{(i)}$, $i<s$ also has an structure as in \eqref{mountains}, it is easy to find a vector distributed as $\mathbf{y}^{(s)}_r$ that satisfies the conclusion of the claim.\\
For the inductive step, let $(\mathbf{z}_l)_{l<3^{t+1}}$ be a block sequence of vectors with the same distribution as $\mathbf{y}^{(s(t+1))}_r$ and let $\mathbf{z}=\sum_{l<3^{t+1}}r^{\alpha_l}\mathbf{z}_l$ with at least one $\alpha_l=0$. It is easy to see that we can write $\mathbf{z}$ as
\[
	\mathbf{z}=\sum_{m<3}r^{\beta_m}\mathbf{w}_m,
\]
where each $\mathbf{w}_m$ is a linear combination of $3^t$ many vectors distributed as $\mathbf{y}^{(s(t+1))}_r$ and at least one $\beta_m=0$. For each $m<3$ let $\bar{\mathbf{z}}_m\overset{dis}{=}\mathbf{y}^{(st)}_r$ be the vector given by the induction hypothesis when applied to the vector obtained from $\mathbf{w}_m$ by restricting to the coordinates with values greater than or equal to $r^{st}$. Let $\bar{\mathbf{z}}=\sum_{m<3}r^{\beta_m}\bar{\mathbf{z}}_m$. We may now apply the claim in the case $t=1$ to the vector obtained by restricting $\bar{\mathbf{z}}$ to the coordinates with value greater than or equal to $r^s$. Let $\bar{\bar{\mathbf{z}}}$ be the vector obtained in this way. It is easy to extend the vector $\bar{\bar{\mathbf{z}}}$ to a vector distributed as $\mathbf{y}^{(s(t+1))}_r$ with the desired property. 
 
	\end{proof}  
\begin{claim}\label{diameter}
	For every $\epsilon>0$ and $m\in\N$ there exists a normalized block subsequence $(\mathbf{z}_i)_{i<m}$ of $(\mathbf{e}_i)$ such that the positive sphere of $[z_i]_{i<m}$ has diameter less than $\epsilon$ with respect to $dis(\cdot,\cdot)$.  
\end{claim}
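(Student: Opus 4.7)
The strategy is to let each $\mathbf{z}_i$ be a normalized block vector distributed exactly as $\mathbf{y}^{(st)}_r$ for parameters $r\in(0,1)$, $s,t\in\N$ tuned to $\epsilon$ and $m$, and then to deduce that every positive unit combination of the $\mathbf{z}_i$'s lies in a small $dis$-ball around a single rearrangement of $\mathbf{y}^{(st)}_r$. I would fix $t\in\N$ with $3^t\geq m$, choose $r\in(0,1)$ close enough to $1$ that $1-r^t<\epsilon/8$, and then pick $s\in\N$ large enough that $r^{(s-1)t+1}<\epsilon/8$. Let $(\mathbf{w}_l)_{l<3^t}$ be a normalized block sequence with $\mathbf{w}_l\overset{dis}{=}\mathbf{y}^{(st)}_r$, and set $\mathbf{z}_i=\mathbf{w}_i$ for $i<m$. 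Since $PS_{[\mathbf{z}_i]_{i<m}}\subseteq PS_{[\mathbf{w}_l]_{l<3^t}}$, it is enough to bound the $dis$-diameter of the larger positive sphere.

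For $\mathbf{z}=\sum_{l<3^t}b_l\mathbf{w}_l$ in that sphere---so $b_l\geq 0$ and $\max_l b_l=1$, because the $\mathbf{w}_l$ have disjoint supports and are sup-norm normalized---I would replace each positive $b_l$ by the unique power $r^{\alpha_l}$ satisfying $r^{\alpha_l+1}\leq b_l\leq r^{\alpha_l}$ and each vanishing $b_l$ by $r^K$ for $K$ so large that $r^K<\epsilon/8$. The resulting vector $\mathbf{z}^*=\sum_l r^{\alpha_l}\mathbf{w}_l$ then has at least one $\alpha_l=0$ and satisfies $\|\mathbf{z}-\mathbf{z}^*\|_\infty<\epsilon/8$. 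Applying Claim~\ref{pre_diameter} to $\mathbf{z}^*$ produces $\bar{\mathbf{z}}\overset{dis}{=}\mathbf{y}^{(st)}_r$ with $\bar{\mathbf{z}}(i)=r^{j+l}$ for some $0\leq l\leq t$ at every high-value coordinate $i$ of $\mathbf{z}^*$ (those where $\mathbf{z}^*(i)=r^j$ with $j\leq(s-1)t$), and at such indices the pointwise discrepancy is bounded by $1-r^t<\epsilon/8$. Reading off the inductive construction in the proof of Claim~\ref{pre_diameter}---which at each step restricts to coordinates of value at least $r^{st}$ before invoking the induction hypothesis and only later extends to recover the correct distribution---one verifies that outside the high-value coordinates the discrepancy is dominated by $\mathbf{z}^*(i)\leq r^{(s-1)t+1}<\epsilon/8$, so altogether $\|\mathbf{z}-\bar{\mathbf{z}}\|_\infty<\epsilon/4$.

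Applying the previous step to two positive unit vectors $\mathbf{z}^{(1)},\mathbf{z}^{(2)}$ yields rearrangements $\bar{\mathbf{z}}^{(1)},\bar{\mathbf{z}}^{(2)}$ of $\mathbf{y}^{(st)}_r$ with $\|\mathbf{z}^{(j)}-\bar{\mathbf{z}}^{(j)}\|_\infty<\epsilon/4$; since $dis$ is dominated by the sup norm and $dis(\bar{\mathbf{z}}^{(1)},\bar{\mathbf{z}}^{(2)})=0$, the pseudometric triangle inequality gives $dis(\mathbf{z}^{(1)},\mathbf{z}^{(2)})<\epsilon/2<\epsilon$, which is the desired bound. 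The main obstacle is the passage from the index-wise statement of Claim~\ref{pre_diameter} to a uniform $\ell^\infty$ bound on $\|\mathbf{z}^*-\bar{\mathbf{z}}\|_\infty$: the claim literally prescribes $\bar{\mathbf{z}}$ only at the high-value coordinates of $\mathbf{z}^*$, so one must track through the inductive construction the positions at which the extra mass of $\bar{\mathbf{z}}$ is deposited and verify that it either lands at coordinates where $\mathbf{z}^*$ is comparably large or leaves $\bar{\mathbf{z}}(i)=0$ there. Once this support analysis is carried out the error accounting above and the triangle inequality close the proof.
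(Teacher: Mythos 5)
Your argument is essentially the paper's own proof: choose $r$ and $s$ so that $1-r^t$ and $r^{(s-1)t}$ are small, take the $\mathbf{z}_i$ distributed as $\mathbf{y}^{(st)}_r$, approximate an arbitrary positive unit vector by a combination whose coefficients are powers of $r$ with at least one equal to $1$, invoke Claim~\ref{pre_diameter}, and finish with the triangle inequality for $dis(\cdot,\cdot)$. The one point you flag as an obstacle---controlling $\bar{\mathbf{z}}$ at the coordinates where $\mathbf{z}^*$ is below $r^{(s-1)t}$---is glossed over in the paper as well, and your explicit rounding of the coefficients $b_l$ to powers of $r$ only makes precise a step the paper asserts without detail, so the proposal is correct and matches the paper's route.
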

Let $D(\epsilon,m)$ be the minimal $n$ such that we can find a block subsequence $(\mathbf{z}_i)_{i<m}$ of $(\mathbf{e}_i)_{i<n}$ as in Claim \ref{diameter}.

\begin{proof}
	Let $\epsilon>0$, $m\in\N$ be given. To simplify the notation suppose $m=3^t$ for some $t\in\N$. Let $0<r<1$ be such that $1-r^t<\epsilon/4$ and let $s\in\N$ be such that $r^{(s-1)t}<\epsilon/4$. Take $n=s\cdot t$. Let $\mathbf{z}_0<\ldots<\mathbf{z}_{m-1}$  be distributed as $\mathbf{y}^{(n)}_r$. Let $\mathbf{w}_0,\mathbf{w}_1$ be in the positive sphere of $[z_i]_{i<m}$. Let $\bar{\mathbf{w}}_i$, $i<2$ be linear combinations of the vectors $\mathbf{z}_0,\ldots,\mathbf{z}_{m-1}$ of norm 1, whose coefficients are positive powers of $r$ and such that $||\mathbf{w}_i-\bar{\mathbf{w}}_i||<\epsilon/4$, $i<2$. By Claim \ref{pre_diameter} we have that $dis(\bar{\mathbf{w}}_i,\mathbf{z}_0)<\epsilon/4$, and therefore $dis(\mathbf{w}_0,\mathbf{w}_1)<\epsilon$.\\
	
	We now calculate the upper bound for $D(\epsilon,m)$ given by the proof. From the conditions $1-r^t<\epsilon/4$ and $r^{(s-1)t}<\epsilon/4$ we get
	\begin{eqnarray*}
		s&>&\frac{\vert\log\left(\epsilon/4\right)\vert}{\vert t\log r\vert}+1\\
		\vert t\log r\vert&<&\vert\log(1-\epsilon/4)\vert,
	\end{eqnarray*}
	so $s>\frac{\vert\log(\epsilon/4)\vert}{\vert\log(1-\epsilon/4)\vert}+1$ and we have that
	\[
		D(\epsilon,3^t)\leq 3^{(s+1)\cdot t}\leq (3^t)^{\frac{\vert\log(\epsilon/4)\vert}{\vert\log(1-\epsilon/4)\vert}+2}.
	\]
\end{proof}
To see how Theorem \ref{finite_stab} follows from Claims \ref{spreading} and \ref{diameter}, and obtain the resulting upper bound for $n(C,\epsilon,m)$, let $C,\epsilon>0$, $m\in\N$ be given. Suppose $m=3^t$ for some $t\in\N$. Let 
\[
n=\bar{m}(C,\epsilon/3,D(\epsilon/3C,m)).
\]

Let $f:\ell_{\infty}^n\to\R$ be $C$-Lipschitz. By Claim \ref{spreading}, we can find $A\subset n$ of cardinality $D(\epsilon/3C,m)$ such that $f$ is $\epsilon/3$-almost spreading on $PS_{[\mathbf{e}_i]_{i\in A}}$. By Claim \ref{diameter} we can find a block subsequence $(\mathbf{z}_i)_{i<m}$ of $(\mathbf{e}_i)_{i\in A}$ such that $PS_{[\mathbf{z}_0,\ldots,\mathbf{z}_{m-1}]}$ has diameter less than $\epsilon/3C$ with respect to $dis(\cdot,\cdot)$.\\

Let $\mathbf{y}_i\in PS_{[\mathbf{z}_0,\ldots,\mathbf{z}_{m-1}]}$, $i=0,1$. We can find $\mathbf{\bar{y}}_i\in PS_{[\mathbf{z}_0,\ldots,\mathbf{z}_{m-1}]}$ such that $\mathbf{\bar{y}}_i\overset{dis}{=}\mathbf{y}_i$, $i=0,1$ and $\vert\vert \mathbf{\bar{y}}_0-\mathbf{\bar{y}}_1\vert\vert_{\infty}<\epsilon/3C$. Hence we have that 
\begin{eqnarray*}
	\vert f(\mathbf{y}_0)-f(\mathbf{y}_1)\vert&\leq& \vert f(\mathbf{y}_0)-f(\mathbf{\bar{y}}_0)\vert+\vert f(\mathbf{\bar{y}}_0)-f(\mathbf{\bar{y}}_1)\vert+\vert f(\mathbf{y}_1)-f(\mathbf{\bar{y}}_1)\vert\\
	&<&\epsilon.
\end{eqnarray*}
Therefore 
\[
	n(C,\epsilon,3^t)\leq\bar{m}(C,\epsilon/3,D(\epsilon/3C,3^t))<f_3\left(3^{t\cdot s}\lceil\frac{9C}{\epsilon}\rceil^{3^{t\cdot s}}\lceil\log_2\frac{9}{\epsilon}\rceil\right),
\]
where $s=\lceil\frac{\log(\epsilon/12C)}{\log(1-\epsilon/12C)}\rceil+2$.

\section{Conclusions}
As far as we know there is no proof of the infinite $\mathrm{FIN}_k$ Theorem that avoids the use of idempotent ultrafilters. The proof we present of the finite version cannot be adapted to the infinite case. This is because when proving the result for $k+1$, we have to know how many dimensions of the inductive hypothesis we need, and this number depends on the desired length of the homogeneous sequence.\\

We found upper bounds for the Finite Stabilization Theorem in the special case of the spaces $\ell_{\infty}^n$ that grow much slower than the upper bounds we have for the finite $\mathrm{FIN}_k$ Theorem, this suggests that the $\mathrm{FIN}_k$ Theorem is stronger than this special case of the Finite Stabilization Theorem. To make this comparison precise and also because it is interesting in its own right, we still have to find lower bounds for the functions $g_k(n), k\in\N$. This would amount to finding for any given $l\in\N$, a bad coloring of $\mathrm{FIN}_k(N)$ for some $N$, for which there is no sequence of length $l$ generating a monochromatic combinatorial subspace. In this direction it would also be interesting to find a way for stepping up lower bounds for a given $k\in\N$ to bigger values of $k$.\\
 
\emph{Acknowledgments.} We would like to thank Justin Moore for his helpful suggestions. The author is grateful to Jordi Lopez-Abad for sharing his insights throughout the development of this project and for reading earlier versions of this article. His comments greatly improved the content and presentation. We would also like to thank the Fields Institute for their hospitality during the Fall of 2012 when the main part of this project was developed.

\bibliography{C:/Users/Diana/Documents/texfiles/mybib}

\def\cprime{$'$}
\begin{thebibliography}{10}

\bibitem{Baum}
J.~E. Baumgartner.
\newblock A short proof of {H}indman's theorem.
\newblock {\em J. Combinatorial Theory Ser. A}, 17:384--386, 1974.

\bibitem{FINk}
W.~T. Gowers.
\newblock Lipschitz functions on classical spaces.
\newblock {\em European J. Combin.}, 13(3):141--151, 1992.

\bibitem{Gowers_bound}
W.~T. Gowers.
\newblock A new proof of {S}zemer\'edi's theorem.
\newblock {\em Geom. Funct. Anal.}, 11(3):465--588, 2001.

\bibitem{RamseyTheory}
Ronald~L. Graham, Bruce~L. Rothschild, and Joel~H. Spencer.
\newblock {\em Ramsey theory}.
\newblock John Wiley \& Sons Inc., New York, 1980.
\newblock Wiley-Interscience Series in Discrete Mathematics, A
  Wiley-Interscience Publication.

\bibitem{Hindman}
N.~Hindman.
\newblock Finite sums from sequences within cells of a partition of {$N$}.
\newblock {\em J. Combinatorial Theory Ser. A}, 17:1--11, 1974.

\bibitem{Mil_Schech}
Vitali~D. Milman and Gideon Schechtman.
\newblock {\em Asymptotic theory of finite-dimensional normed spaces}, volume
  1200 of {\em Lecture Notes in Mathematics}.
\newblock Springer-Verlag, Berlin, 1986.
\newblock With an appendix by M. Gromov.

\bibitem{FDDs}
E.~Odell, H.~P. Rosenthal, and Th. Schlumprecht.
\newblock On weakly null {FDD}s in {B}anach spaces.
\newblock {\em Israel J. Math.}, 84(3):333--351, 1993.

\bibitem{Distortion_handbook}
Edward Odell and Th. Schlumprecht.
\newblock Distortion and asymptotic structure.
\newblock In {\em Handbook of the geometry of {B}anach spaces, {V}ol.\ 2},
  pages 1333--1360. North-Holland, Amsterdam, 2003.

\bibitem{Paris_Harrington}
J.~Paris and L.~Harrington.
\newblock A mathematical incompleteness in {P}eano {A}rithmetic.
\newblock pages 1133--1142, 1977.

\bibitem{Ramsey_Spaces}
S.~Todor\v{c}evi\'c.
\newblock {\em Introduction to {R}amsey spaces}, volume 174 of {\em Annals of
  Mathematics Studies}.
\newblock Princeton University Press, Princeton, NJ, 2010.

\end{thebibliography}
\end{document}